\newtheorem{thm}{Theorem}[section] 
\newtheorem{prop}[thm]{Proposition}
\newtheorem{cor}[thm]{Corollary} 
\newtheorem{lem}[thm]{Lemma}
\theoremstyle{definition}
\numberwithin{equation}{section}
\newcommand{\skal}[2]{\langle #1,#2\rangle}
\newcommand{\alg}[1]{\mathfrak{#1}}
\begin{document}

\title{Lifts of maps to frame bundles}
\author{Kamil Niedzia\l omski and Malgorzata Niedzia\l omska}

\date{}
\subjclass[2020]{53C10; 58C25; 53C43; 53C24}
\keywords{Submersion, frame bundle, adapted frame bundle, Sasaki-Mok metric, horizontally conformal map, harmonic map}

\address{
Department of Mathematics and Computer Science \endgraf
University of \L\'{o}d\'{z} \endgraf
ul. Banacha 22, 90-238 \L\'{o}d\'{z} \endgraf
Poland
}
\email{kamil.niedzialomski@wmii.uni.lodz.pl}
\email{malgorzata.niedzialomska@wmii.uni.lodz.pl}

\begin{abstract}
Let $(M,g)$ be a Riemannian manifold, $L(M)$ be its frame bundle, $O(M)$ its orthonormal frame bundle. For a distribution $D$ on $M$ we define a subbundle $L(D)\subset L(M)$ or $O(D)\subset O(M)$ in a natural way. This allows us to consider a lift $L\varphi$ of a map $\varphi:M\to N$ not necessarily being a local diffeomorphism. More precisely, if $\varphi:M\to N$ is a submersion, then $L\varphi:L(\mathcal{H}^{\varphi})\to L(N)$ or $L\varphi:O(\mathcal{H}^{\varphi})\to L(N)$, where $\mathcal{H}^{\varphi}$ is a horizontal distribution of $\varphi$. Equipping $L(M)$ and $L(N)$ with the Mok metrics, we study conformality and harmonicity of lifts $L\varphi$. 
\end{abstract}

\maketitle

\section{Introduction} 
Let $(M,g)$ be a Riemannian manifold, $L(M)$ its frame bundle. The first example of a Riemannian metric on $L(M)$ was considered by Mok \cite{mok}. This metric, called the Sasaki--Mok metric or the diagonal lift $g^d$ of $g$, was also investigated in \cite{cl1,cl2}. Later, Kowalski and Sekizawa \cite{ks1, ks2} introduced a family of natural metrics on the frame bundle. Thus, the geometry of a frame bundle and its subbundles can be considered. This has been done, for example, by the first author in \cite{n2, n3}. Moreover, we can study the geometric properties of maps between frame bundles. 

The most natural example of a map between frame bundles is a lift of a local diffeomorphism. Precisely, for a local diffeomorphism $\varphi:M\to N$, its lift $L\varphi:L(M)\to L(N)$ is given by the following formula
\begin{equation*}
L\varphi(u)=(\varphi_{\ast}u_1,\ldots,\varphi_{\ast}u_n),\quad u=(u_1,\ldots,u_n)\in L(M).
\end{equation*}
If $\varphi:M\to N$ is an immersion, then $\varphi$ is a local diffeomorphism onto its image $\varphi(M)$, which is a submanifold of $N$. Hence, in this case $L\varphi:L(M)\to L(\varphi(M))$. 

It is also possible to define the lift of a submersion. In deed, a submersion $\varphi:M\to N$ induces an isomorphism of the horizontal distribution $\mathcal{H}^{\varphi}=({\rm ker}\varphi_{\ast})^{\bot}$ and the tangent space $TN$ of $N$, $\varphi_{\ast}:\mathcal{H}^{\varphi}\to TN$. Moreover, for any distribution $D$ on $M$ of dimension $k$ we put
\begin{equation*}
L(D) = \{u = (u_1, \ldots, u_n)\in L(M)\mid u_1,\ldots, u_k\in D; u_{k+1},\ldots, u_n\in D^{\bot}\},
\end{equation*} 
Then $L(D)$ is a subbundle of $L(M)$ and the lift $L\varphi$ of $\varphi$ is a well-defined map on $L(\mathcal{H}^{\varphi})$. However, since we work in the Riemannian setting, it is maybe more convenient to consider the following subbundle of the orthonormal frame bundle $O(M)$:
\begin{equation*}
O(D) = \{u = (u_1, \ldots, u_n)\in O(M)\mid u_1,\ldots, u_k\in D; u_{k+1},\ldots, u_n\in D^{\bot}\}.
\end{equation*}
Clearly, $L\varphi$ restricts to $O(\mathcal{H}^{\varphi})$. 

In this paper, we study the geometry of the lifts $L\varphi$. First of all, we investigate conformality. We may consider two sets
\begin{align*}
    {\rm Conf}(L(M),L(N)) & = \{L\varphi:L(\mathcal{H}^{\varphi})\to L(N)\mid \textrm{$L\varphi$ is conformal}\}, \\
    {\rm Conf}(O(M),L(N)) & = \{L\varphi:O(\mathcal{H}^{\varphi})\to L(N)\mid \textrm{$L\varphi$ is conformal}\}
\end{align*} 
Clearly, we have an inclusion ${\rm Conf}(L(M),L(N)) \subset {\rm Conf}(O(M),L(N))$, but not necessary the equality. We will study the bigger (at least, not smaller) set. Thus we restrict our considerations to $O(M)$. Secondly, we consider the condition for a lift $L\varphi$ to be a harmonic morphism.

In order to achieve the conditions for conformality and harmonicity, we study properties of the subbundle $O(\mathcal{H}^{\varphi})$. We rely on the general results obtained by the second author for the geometry of $O(D)$ inside $O(M)$, see \cite{n2, n3}. However, we prove some additional results. Using the description of the horizontal and vertical distributions of $O(\mathcal{H}^{\varphi})$ we show how horizontal and vertical lifts behave under the action of the differential of a lift $L\varphi$. Then, we describe the vertical and horizontal distributions: $\mathcal{V}^{L\varphi}$ and $\mathcal{H}^{L\varphi}$. To determine the latter, we need a Riemannian metric on $L(M)$. We consider the most natural one, a type of well known Mok metric.

The main part of the paper contains two theorems - characterization of conformality andcondition for the lift to be a harmonic morphism.

\subsection{Motivation}

The motivation for all the considerations in this article comes from the analogous considerations for the lifts of maps to the tangent bundle considered by the second author and W. Kozlowski in \cite{kn1, kn2}. In this case, the differential $\varphi_{\ast}:TM\to TN$ of a submersion $\varphi:M\to N$ is again a well-defined submersion. Equipping $TM$ with natural Riemannian metrics conformality and harmonicity of $\varphi_{\ast}$ can be considered. Results in this note are a natural generalization of mentioned results for the tangent bundle.

\subsection{Notation}

Adopt the following index convention. The dimension of a base manifold $M$ is $n$, whereas the dimension of a distribution $D$ and $\mathcal{H}^{\varphi}$ on $M$ is $k$. Indices $A,B,C$ run from $1$ to $k$, indices $\alpha,\beta,\gamma$ from $k+1$ to $n$ and indices $i,j,l$ from $1$ to $n$.

\section{Preliminary results}

\subsection{Frame bundles}

Let $M$ be an $n$--dimensional manifold and let $\pi_{L(M)}:L(M)\to M$ be its frame bundle. Thus, $L(M)$ consists of all bases $(u_1,\ldots,u_n)$ of tangent spaces $T_xM$, $x\in M$. The Lie group $GL(n)$ acts naturally from the right on elements of $L(M)$.

Let $\nabla$ be a linear connection on $M$. It defines the horizontal distribution $\mathcal{H}^{L(M)}$ in $L(M)$. Together with the vertical distribution $\mathcal{V}^{L(M)}$, being the kernel of the differential of the projection $\pi_{L(M)}$, we have the splitting
\begin{equation*}
T_uL(M) = \mathcal{H}^{L(M)}_u \oplus \mathcal{V}^{L(M)}_u,\quad u\in L(M).
\end{equation*}
Moreover, any vector $X\in T_xM$ has the unique lift $X^h_u$ to $\mathcal{H}^{L(M)}_u$, where $\pi_{L(M)}(u)=x$. The lift $X^h$ is called the {\it horizontal lift} of $X$. There are also natural vector fields associated with the vertical distribution. Let $A\in\alg{gl}(n)$ and denote by $A^{\ast}$ the following vector field
\begin{equation*}
A^{\ast}_u = \frac{d}{dt}\left(u\cdot {\rm exp}(tA)\right)_{t=0}.
\end{equation*}
Since the curve $t\mapsto u\cdot {\rm exp}(tA)$ lies in the fiber $L(M)_x$, where $\pi_{L(M)}(u)=x$, it follows that $A^{\ast}$ is a vertical vector called {\it fundamental vertical vector}. There is an alternative descriptions of the vertical distribution \cite{n2, n3}, which we will use. Let $P$ be an endomorphism of the tangent bundle of $M$. Treating a frame $u\in L(M)$ as a linear isomorphism $u:\mathbb{R}^n\to T_xM$, $\pi(u)=x$, a composition $u^{-1}\circ P\circ u$ is an endomorphism of $\mathbb{R}^n$, i.e. a matrix in $\alg{gl}(n)$. Thus we may consider a fundamental vertical vector of such matrix at a frame $u$. Denote it by $P^{\ast}_u$. Thus
\begin{equation*}
    P^{\ast}_u = (u^{-1}\circ P\circ u)^{\ast}_u.
\end{equation*}

In order to study the geometry of submersions between frame bundles, we need to introduce Riemannian metrics on the frame bundles. There are many ways to do it \cite{mok,ks0,n1}. We will introduce and study the most natural Riemannian metric, in which horizontal and vertical distributions are orthogonal. Let $g_M$ be a Riemannian metric on $M$. Consider on $L(M)$ the following Riemannian metric $g_{L(M)}$:
\begin{align*}
    g_{L(M)}(X^h_u, Y^h_u) &= g_M(X,Y) \\
    g_{L(M)}(X^h_u, Q^{\ast}_u) &= 0 \\
    g_{L(M)}(P^{\ast}_u, Q^{\ast}_u) &= \sum_i g_M(P(u_i), Q(u_i)),
\end{align*}
where $P,Q$ are endomorphisms of $TM$. This metric is often called  Mok metric or diagonal metric \cite{mok}.

Let us state the formula for the Levi-Civita connection of $g_{L(M)}$. Let $\nabla$ be the Levi-Civita connection of $g_M$ on $M$ and let $R$ be the curvature tensor with the convention $R(X,Y) = [\nabla_X,\nabla_Y]-\nabla_{[X,Y]}$. Firstly, we have \cite{n1}
\begin{align*}
    [X^h, Y^h] &= [X,Y]^h - R(X,Y)^{\ast},\\
    [X^h, Q^{\ast}] &= -(\nabla_XP)^{\ast},\\
    [P^{\ast}, Q^{\ast}] &= -[P,Q]^{\ast}.
\end{align*}
Moreover, the Levi-Civita connection $\nabla^{L(M)}$ of $g_{L(M)}$ satisfies \cite{n2, n3}:
\begin{align*}
     \nabla^{L(M)}_{X^h} Y^h &=(\nabla_XY)^h - \frac{1}{2}R(X,Y)^{\ast}\\
     \nabla^{L(M)}_{X^h} Q^{\ast} &=\frac{1}{2}R_Q(X)^h,\\
     \nabla^{L(M)}_{P^{\ast}} Y^h &=\frac{1}{2}R_P(Y)^h + (\nabla_XP)^{\ast},\\
     \nabla^{L(M)}_{P^{\ast}} Q^{\ast} &= (Q\circ P)^{\ast},
\end{align*}
where $R_P$ is a curvature type endomorphism of the form
\begin{equation*}
    R_P(X) = \sum_i R(e_i, P(e_i))X,\quad X\in TM.
\end{equation*}

We will consider also orthonormal frame bundle $\pi_{O(M)}:O(M)\to M$. It is a subbundle of $L(M)$ with a structure group $O(n)$. Its Lie algebra of skew--symmetric matrices is denoted by $\alg{so}(n)$. The connection form $\omega$ of the Levi--Civita connection reduces to a connection form on $O(M)$. Thus, the horizontal distribution $\mathcal{H}^{O(M)}_u$ at frames $u\in O(M)$ coincides with the horizontal distribution $\mathcal{H}^{L(M)}_u$. The vertical distribution $\mathcal{V}^{O(M)}_u$ is spanned by elements $P^{\ast}_u$, where $P$ is a skew-symmetric endomorphism, i.e. $P\in \alg{so}(TM) = O(M)\times_{{\rm ad} O(n)} \alg{so}(n)$, or alternatively,
\begin{equation*}
    g(P(X),Y) = -g(X, P(Y)),\quad X,Y\in TM.
\end{equation*}

Let us describe the Levi-Civita connection of $g_{L(M)}$ on $O(M)$. The first three formulas for the Levi-Civita connection of $g_{L(M)}$ are also valid when restricting to $O(M)$. The only difference is for the value of a Levi-Civita connection $\nabla^{O(M)}$ on vertical vector fields:
\begin{equation*}
    \nabla^{O(M)}_{P^{\ast}} Q^{\ast} = -\frac{1}{2}[P,Q]^{\ast},\quad P,Q\in\alg{so}(TM).
\end{equation*}

\subsection{Frame bundle adapted to a distribution}

Let $(M,g)$ be a $n$--dimensional Riemannian manifold and $\pi_{L(M)}:L(M)\to M$ its frame bundle and $\pi_{O(M)}:O(M)\to M$ its orthonormal frame bundle. Let $D$ be a $k$--dimensional distribution on $M$. Denote its orthogonal complement by $D^{\bot}$. We define a subbundle of the orthonormal frame bundle associated with $D$ by
\begin{equation*}
O(D) = \{u = (u_1, \ldots, u_n)\in O(M)\mid u_1,\ldots, u_k\in D; u_{k+1},\ldots, u_n\in D^{\bot}\}
\end{equation*}
Consider the following group
\begin{equation*}
G = \left\{ \left( \begin{array}{cc} a & 0 \\ 0 & b \end{array} \right):\, a\in O(k), b\in O(n-k) \right\}.
\end{equation*}
Then  $G$ acts transitively on $O(D)$. Thus, $\pi_{O(D)}:O(D)\to M$ is a subbundle of $O(M)$ with a structure group $G$. Denoting by $\alg{g}$ the Lie algebra of $G$, we have $\alg{so}(n)=\alg{g}\oplus\alg{m}$, where
\begin{align*}
\alg{g} &=\left\{\left(\begin{array}{cc} A & 0 \\ 0 & B \end{array}\right):\, A\in\alg{so}(k), B\in \alg{so}(n-k) \right\},\\
\alg{m} &=\left\{\left(\begin{array}{cc} 0 & C \\ -C^{\top} & 0 \end{array}\right):\, C\in\mathcal{M}_{(n-k)\times k}\right\}.
\end{align*}
Notice that
\begin{equation*}
[\alg{g},\alg{n}]\subset\alg{m}.
\end{equation*}
i.e., the decomposition $\alg{so}(n)=\alg{g}\oplus\alg{n}$ is reductive. This is important in the context of inducing connection forms. Moreover
\begin{equation*}
TM = O(D)\times_{G}\mathbb{R}^n,
\end{equation*}
i.e. the tangent bundle is a vector bundle associated with $O(D)$. Notice that the adjoint bundle
\begin{equation*}
\alg{g}(TM) = O(M)\times_{{\rm ad}(G)}\alg{g}\subset {\rm End}(TM)
\end{equation*}
is the bundle of endomorphisms of the tangent bundle $TM$ which leave $D$ and $D^{\bot}$ invariant, Moreover, for an endomorphism $P:TM\to TM$, denote by $P^{\top}$ its part restricted to $D$, i.e., $P^{\top}:D\to D$ is given by the condition
\begin{equation*}
    P(X) = P^{\top}(X) + P'(X),\quad X\in D,
\end{equation*}
where the sum is written with respect to the splitting $TM = D \oplus D^{\bot}$. Analogously, we define $P^{\bot}:D^{\bot}\to D^{\bot}$,
\begin{equation*}
    P(X) = P''(X) + P^{\bot}(X),\quad X\in D{^\bot}.
\end{equation*}
In matrix block notation with respect to the splitting $TM = D\oplus D^{\bot}$
\begin{equation*}
    P = \left(\begin{array}{cc}
    P^{\top} & P' \\ P'' & P^{\bot}
    \end{array}\right).
\end{equation*}
Hence, $P^{\top}\in\alg{g}(TM)$ if and only if $P\in\alg{so}(D)$, $P^{\bot}\in\alg{so}(D^{\bot})$, $P'=0$ and $P''=0$. 

\subsection{Natural connection associated with a distribution on a Riemannian manifold}
Let us define a linear connection associated with a distribution $D$. Firstly, let $\nabla$ be the Levi--Civita connection associated with the fixed Riemannian metric $g_M$. Then any vector $X\in TM$ can be decomposed as
\begin{equation*}
    X=X^{\top}+X^{\bot}
\end{equation*} 
with respect to the $g_M$--orthogonal decomposition $TM=D\oplus D^{\bot}$. We follow \cite{n2}. Put
\begin{equation*}
\nabla^D_XY = (\nabla_X Y^{\top})^{\top} + (\nabla_X Y^{\bot})^{\bot}. 
\end{equation*}
It can be easily checked that it is a linear connection. Denote by $S$ the difference tensor of $\nabla$ and $\nabla^D$. Then
\begin{equation*}
S_XY = \nabla_XY - \nabla^D_XY = (\nabla_X Y^{\top})^{\bot} + (\nabla_X Y^{\bot})^{\top}.
\end{equation*}
The torsion tensor $T^D(X,Y)=\nabla^D_XY-\nabla^D_YX-[X,Y]$ of $\nabla^D$ equals
\begin{equation*}
T^D(X,Y) = -S_XY + S_YX = (\nabla_YX^{\top}-\nabla_XY^{\top})^{\bot} + (\nabla_YX^{\bot} - \nabla_XY^{\bot})^{\top}.
\end{equation*}
Notice that $T^D$ restricted to tangent vectors to $D$ is an integrability tensor of $D$, whereas $T^D$ restricted to vectors orthogonal to $D$ is an integrability tensor of $D^{\bot}$.

Denote by $R$ and $R^D$ the curvature tensors of $\nabla$ and $\nabla^D$, respectively, with the convention $R(X,Y)=[\nabla_X,\nabla_Y]-\nabla_{[X,Y]}$. By the fact that $\nabla_X = \nabla^D_X + S_X$ we have
\begin{align*}
R(X,Y)Z &=R^D(X,Y)Z + (\nabla^D_XS)_YZ - (\nabla^D_YS)_XZ + S_{T^D(X,Y)}Z _+ [S_X,S_Y]Z,
\end{align*}
where we put
\begin{equation*}
    (\nabla^D_XS)_YZ = \nabla^D_X(S_YZ) - S_{\nabla^D_XY}Z - S_X(\nabla^D_YZ).
\end{equation*}

Le us move to the description of connections $\nabla$ and $\nabla^D$ on the level of frame bundles. 

Let $\mathcal{H}^{O(M)}$ be the horizontal distribution of the connection induced by $\nabla$ and let $\mathcal{H}^{O(D)}$ be the horizontal distribution induced by $\nabla^D$. Let $X^h_u$ denote the horizontal lift to $\mathcal{H}^{O(M)}_u$ of a vector $X\in T_xM$, $x=\pi_{O(M)}(u)$ and, analogously consider the horizontal lift $X^{h,D}_u$ for $u\in O(D)$. 

The following lemma states the relation between $X^h$ and $X^{h,D}$.

\begin{lem}\label{lem:XhD}\cite{n1}
Let $X\in T_xM$ and $u\in O(D)$ be such that $\pi_{O(D)}(u)=x$. Then
\begin{equation}\label{eq:differencelifts}
X^h_u=X^{h,D}_u-(S_X)^{\ast}_u.
\end{equation} 
\end{lem}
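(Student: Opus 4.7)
The plan is to exploit the fact that $X^h_u$ and $X^{h,D}_u$ both project to $X$ under $(\pi_{O(M)})_{\ast}$, so their difference is vertical. It then suffices to identify this vertical vector as $-(S_X)^{\ast}_u$. For this identification to even make sense inside $O(M)$, I would first verify that $S_X$ is skew-symmetric. This follows because $\nabla^D$ is a metric connection: for $Y, Z \in D$ one has $\nabla^D_X Y = (\nabla_X Y)^{\top}$ and $g(\nabla^D_X Y, Z) = g(\nabla_X Y, Z)$, and an analogous computation on $D^{\bot}$, so both $\nabla$ and $\nabla^D$ preserve $g$ and hence $S_X = \nabla_X - \nabla^D_X$ is $g$-skew. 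Consequently $(S_X)^{\ast}_u$ is a legitimate vertical vector at every $u \in O(D)$.

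The cleanest route now is via connection forms. Let $\omega$ and $\omega^D$ denote the $\mathfrak{gl}(n)$-valued connection $1$-forms of $\nabla$ and $\nabla^D$ on $L(M)$. Any two linear connections differ by a tensor field, and on the principal bundle level the difference of their connection forms is the tensorial horizontal form
\begin{equation*}
(\omega - \omega^D)(Z) \;=\; u^{-1} \circ S_{(\pi_{L(M)})_{\ast} Z} \circ u, \qquad Z \in T_u L(M).
\end{equation*}
Taking $Z = X^{h,D}_u$ and using the defining property $\omega^D(X^{h,D}_u) = 0$, I obtain $\omega(X^{h,D}_u) = u^{-1} \circ S_X \circ u$. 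On the other hand, by the very definition of $(S_X)^{\ast}_u$ as the fundamental vertical vector of the matrix $u^{-1} \circ S_X \circ u$, we also have $\omega\bigl((S_X)^{\ast}_u\bigr) = u^{-1} \circ S_X \circ u$. Therefore $\omega\bigl(X^{h,D}_u - (S_X)^{\ast}_u\bigr) = 0$, so the vector $X^{h,D}_u - (S_X)^{\ast}_u$ is $\nabla$-horizontal; since it projects to $X$, it must coincide with $X^h_u$.

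The main point requiring care is the formula for $\omega - \omega^D$, which is the only nontrivial ingredient; an alternative and perhaps more concrete way around it would be to work with parallel transport: let $c(t)$ be a curve with $c'(0)=X$, let $u(t)$ and $v(t)$ be the $\nabla$- and $\nabla^D$-parallel transports of $u$ along $c$, and then compute the derivative of the matrix $A(t)$ defined by $u(t) = v(t)\cdot A(t)$ at $t=0$ using $\nabla_{c'(0)} v_i(0) = S_X u_i$. Either route yields the stated identity.
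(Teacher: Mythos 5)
The paper does not prove this lemma at all --- it is simply cited from \cite{n1} --- so there is no in-paper argument to compare against; your proposal supplies a proof, and it is correct. The structure is the standard one: both lifts project to $X$, so the difference is vertical, and it is identified via the connection forms. Your preliminary check that $S_X$ is skew-symmetric is the right thing to verify (both $\nabla$ and $\nabla^D$ are metric, so their difference is $g$-skew), since otherwise $(S_X)^{\ast}_u$ would not be tangent to $O(M)$. The key formula $(\omega-\omega^D)(Z)=u^{-1}\circ S_{\pi_{\ast}Z}\circ u$ has the correct sign: taking the $\nabla^D$-parallel frame $v(t)$ along a curve with $\dot c(0)=X$ and $v(0)=u$, one gets $\nabla_X v_i = S_X u_i$, hence $\omega(X^{h,D}_u)=u^{-1}\circ S_X\circ u=\omega\bigl((S_X)^{\ast}_u\bigr)$, so $X^{h,D}_u-(S_X)^{\ast}_u$ is $\nabla$-horizontal and projects to $X$, which is exactly the claim. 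The parallel-transport alternative you sketch is the same computation in different clothing. One small remark worth adding for the reader: since $S_X$ interchanges $D$ and $D^{\bot}$, the matrix $u^{-1}\circ S_X\circ u$ lies in $\alg{m}$, which is consistent with Corollary \ref{cor:XhD_tangent} --- $X^{h,D}_u$ is tangent to $O(D)$ while $X^h_u$ in general is not.
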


It is important to note that $X^{h,D}_u$, for $u\in O(D)$, is tangent to $O(D)$. In fact, $\nabla^D$ induces the connection form in $O(D)$ associated with $\alg{g}$--component of the connection form $\omega$ of the Levi-Civita connection $\nabla$.

Let us state the above fact as a corollary to Lemma \ref{lem:XhD}.

\begin{cor}\label{cor:XhD_tangent}
The horizontal lift $X^{h,D}_u$, for $u\in O(D)$, is tangent to $O(D)$.
\end{cor}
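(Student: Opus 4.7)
My plan is to show that $\nabla^D$ lifts to a principal connection on the reduced bundle $O(D)\to M$ with structure group $G$; the tangency of $X^{h,D}_u$ to $O(D)$ then holds essentially by construction.

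The first observation I would record is that $\nabla^D$ preserves the orthogonal decomposition $TM=D\oplus D^{\bot}$: directly from $\nabla^D_XY=(\nabla_X Y^{\top})^{\top}+(\nabla_X Y^{\bot})^{\bot}$, one sees that $\nabla^D_X Y\in\Gamma(D)$ whenever $Y\in\Gamma(D)$, and analogously for $D^{\bot}$. A short computation using the metric compatibility of $\nabla$ together with the orthogonality of $D$ and $D^{\bot}$ then shows $\nabla^D$ is metric compatible. In the principal bundle language, these two properties are precisely the statement that the $\alg{so}(n)$-valued connection form $\omega^D$ of $\nabla^D$ on $O(M)$ takes values in the subalgebra $\alg{g}\subset\alg{so}(n)$.

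Hence $\omega^D$ restricts to a $\alg{g}$-valued connection form on the reduced principal $G$-bundle $O(D)$, defining a principal connection there whose horizontal distribution $\mathcal{H}^{O(D)}_u\subset T_uO(D)$ projects isomorphically onto $T_xM$. The vector $X^{h,D}_u\in T_uO(M)$ is characterized by projecting to $X$ and being annihilated by $\omega^D_u$; the same characterization is satisfied by the horizontal lift of $X$ in $\mathcal{H}^{O(D)}_u$, so uniqueness of horizontal lifts in $O(M)$ forces $X^{h,D}_u\in\mathcal{H}^{O(D)}_u\subset T_uO(D)$. The only mildly nontrivial step is verifying that $\omega^D$ takes values in $\alg{g}$; both the preservation of $D,D^{\bot}$ and the metric compatibility are immediate from the definition of $\nabla^D$, so the main work is really just packaging them into the reduction statement.
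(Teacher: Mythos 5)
Your argument is correct and coincides with the justification the paper itself gives immediately before the corollary, namely that $\nabla^D$ preserves $D$, $D^{\bot}$ and the metric, so its connection form is $\alg{g}$-valued and induces a principal connection on the reduced bundle $O(D)$, whose horizontal lifts are by construction tangent to $O(D)$. The only (cosmetic) difference is that the paper packages the statement as a corollary of Lemma~\ref{lem:XhD} --- from $X^{h,D}_u=X^h_u+(S_X)^{\ast}_u$ with $S_X\in\alg{m}(TM)$ one reads off that the $\alg{g}$-component of $\omega$ annihilates $X^{h,D}_u$ --- whereas you verify the $\alg{g}$-valuedness of $\omega^D$ directly from the definition of $\nabla^D$; both routes rest on the same reduction of the connection form.
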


The Levi-Civita connection $\nabla^{O(\mathcal{H}^{\varphi})}$ satisfies \cite{n1, n2}:
\begin{align*}
     \nabla^{O(D)}_{X^{h,D}} Y^{h,D} &=(\nabla_XY)^{h,D} - \frac{1}{2}R^D(X,Y)^{\ast}\\
     \nabla^{O(D)}_{X^{h,D}} Q^{\ast} &=\frac{1}{2}L_Q(X)^{h,D} + (\nabla_XP)^{\ast},\\
     \nabla^{O(D)}_{P^{\ast}} Y^h &=\frac{1}{2}L_P(Y)^{h,D},\\
     \nabla^{O(D)}_{P^{\ast}} Q^{\ast} &= -\frac{1}{2}[P,Q]^{\ast},
\end{align*}
where $L_P$ is an endomorphism of the form
\begin{equation*}
    L_P(X) = \mathcal{W}^{-1}(R_P(X) - \sum_i \skal{(\nabla_XP)_{\alg{m}}}{S_{e_i}}e_i),\quad X\in TM,
\end{equation*}
and $R^D$ is the curvature tensor of $\nabla^D$. $\mathcal{W}$ is an invertible endomorphism of the tangent bundle satisfying (see the next sections for more details):
\begin{equation*}
    g_M(X,\mathcal{W}(Y)) = g_{O(M)}(X^{h,D}, Y^{h,D}).
\end{equation*}

\subsection{The tangent bundle}
We will need some results concerning the tangent bundle.

The tangent bundle $\pi_{TM}:TM\to M$ of a manifold $M$ is an associated bundle with the frame bundle $L(M)$ with the fiber $\mathbb{R}^n$, $TM = L(M)\times_{GL(n0)}\mathbb{R}^n$, that is, any tangent vector $X\in T_xM$ is of the form $[u,\xi]$, where $u\in L(M)$ over $M$ and $\xi\in\mathbb{R}^n$: $X = \sum_i \xi_iu_i$. Let $\omega$ be the connection form on $L(M)$ of the Levi-Civita connection. Let $\gamma$ be a curve on $M$ and fix $u\in L(M)$. Denote by $\gamma^h_u$ the unique horizontal lift of $\gamma$ to $u\in L(M)$. Then, by definition, $[\gamma^h,\xi]$ is a horizontal curve on $TM$ for any $\xi\in\mathbb{R}^n$. In particular, if $\gamma^h_u(t)=(u_1(t),\ldots,u_n(t))$, then for $\xi=\xi_i$, we see that $t\mapsto u_i(t)$ is a horizontal curve in $TM$. The space of horizontal vectors in $TM$, i.e., tangent to horizontal curves, is denoted by $\mathcal{H}^{TM}$. Any vector $X\in T_xM$ has therefore the unique lift $X^{h,TM}_Z$ to $\mathcal{H}^{TM}_Z$, $Z\in T_xM$. Moreover, we have the vertical distribution $\mathcal{V}^{TM}={\rm ker}\pi_{TM\ast}$. Again, any vector $X\in T_xM$ has the unique lift $X^v_Z$ to the vertical distribution $\mathcal{V}^{TM}_Z$ given by $X^{v,TM}_Z = \frac{d}{dt}(t\mapsto Z + tX)_{t=0}$. Summing up, the tangent bundle to $TM$ splits as $TTM = \mathcal{H}^{TM}\oplus\mathcal{V}^{TM}$.

The horizontal and vertical distributions may be described also with the use of, so called, connection map $K:TTM\to M$ defined by Dombrowski \cite{dom}. It satisfies the condition
\begin{equation*}
K(Z_{\ast}X) = \nabla_XZ,
\end{equation*}
where we treat $Z$ as a map $Z:M\to TM$. Equivalently, if $\gamma:I\to M$ is a curve on $M$ such that $\dot{\gamma}(0) = X$ and $Z:I\to TM$ is any vector field along $\gamma$, then
\begin{equation*}
K(\dot{Z}(0)) = (\nabla_XZ)(0).
\end{equation*}
We have
\begin{align*}
& \pi_{TM\ast} X^{h,TM} = X, && K(X^{h,TM}) = 0,\\
& \pi_{TM\ast} X^{v,TM} = 0,\ && K(X^{v,TM}) = X.
\end{align*}

All above facts can be rewritten replacing $L(M)$ by $O(M)$ assuming $M$ is equipped with a Riemannian metric $g_M$. Consider the mappings $\pi^i:L(M)\to TM$ (resp. $\pi^i:O(M)\to TM$), $\pi^i(u)=u_i$, which associate to a frame its $i$--th vector. We may write
\begin{equation*}
\pi^i(u) = [u,\xi_i],\quad u\in L(M),
\end{equation*}
where $\xi_i\in\mathbb{R}^n$ is the $i$--th vector of the canonical basis. 

\begin{lem}\label{lem:pii}
Let $X\in TM$ and $P\in{\rm End}(TM)$ and $u\in O(M)$. Then
\begin{equation*}
    \pi^i_{\ast}X^h_u = X^{h,TM}_{u_i}\quad\textrm{and}\quad
    \pi^i_{\ast}P^{\ast}_u = P(u_i)^{v,TM}_{u_i}.
\end{equation*}
\end{lem}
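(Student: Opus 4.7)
The plan is to verify each identity by determining its image under $\pi_{TM\ast}$ (the base-point component) and under Dombrowski's connection map $K$ (the vertical component), since these two data determine any tangent vector on $TTM$ uniquely via the splitting $TTM=\mathcal{H}^{TM}\oplus\mathcal{V}^{TM}$.

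For the first identity, I would pick a curve $\gamma:I\to M$ with $\gamma(0)=x=\pi_{O(M)}(u)$ and $\dot\gamma(0)=X$, and take $\tilde\gamma(t)=(u_1(t),\ldots,u_n(t))$ to be its horizontal lift to $O(M)$ starting at $u$. By the very definition of the connection on the frame bundle, each $u_j(t)$ is the $\nabla$-parallel transport of $u_j$ along $\gamma$, and $\dot{\tilde\gamma}(0)=X^h_u$. Consequently $\pi^i\circ\tilde\gamma$ is the curve $t\mapsto u_i(t)$ in $TM$ covering $\gamma$, so $\pi_{TM\ast}\pi^i_\ast X^h_u=X$, while parallelism gives $K(\pi^i_\ast X^h_u)=\nabla_X u_i=0$. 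These are precisely the defining conditions for $X^{h,TM}_{u_i}$.

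For the second identity, I would realise $P^\ast_u$ along its defining curve. Set $A=u^{-1}\circ P\circ u\in\alg{gl}(n)$ and consider $c(t)=u\cdot\exp(tA)$, whose velocity at $0$ is $P^\ast_u$ by construction. The $i$-th vector of the frame $u\cdot\exp(tA)$ is $\sum_l[\exp(tA)]_{li}\,u_l$, so differentiating at $t=0$ gives $\sum_l A_{li}\,u_l$. By the definition of $A$, the $i$-th column of $A$ records the coordinates of $P(u_i)$ in the basis $(u_1,\ldots,u_n)$, so this sum collapses to $P(u_i)$. Since $c$ stays in the fibre over $x$, the curve $\pi^i\circ c$ lies inside $T_xM\subset TM$, and its velocity is the canonical vertical lift $P(u_i)^{v,TM}_{u_i}$.

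The only real subtlety is bookkeeping the right-action convention of $GL(n)$ on frames consistently, so that the identification $\sum_l A_{li}\,u_l=P(u_i)$ comes out with the correct indices; once that is fixed, both identities fall out by unpacking the definitions of $X^{h,TM}$, $X^{v,TM}$, and $P^\ast$, with no further analytic input required.
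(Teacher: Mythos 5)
Your proposal is correct and follows essentially the same route as the paper: both identities are obtained by pushing forward the defining curves ($\gamma^h_u$ for the horizontal lift, $t\mapsto u\cdot\exp(tA)$ for the fundamental vertical vector) and reading off the velocity of $t\mapsto u_i(t)$. The only cosmetic difference is that for the first identity you certify horizontality of $t\mapsto u_i(t)$ via the pair $(\pi_{TM\ast},K)$ and parallel transport, whereas the paper invokes directly the associated-bundle description $u_i(t)=[\gamma^h_u(t),e_i]$; your index bookkeeping $P(u_i)=\sum_l A_{li}u_l$ agrees with the paper's convention.
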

\begin{proof}
Let $X$ be tangent to a curve $\gamma$ at $t=0$. With above notation
\begin{equation*}
\pi^i_{\ast}X^h_u = \pi^i_{\ast}\dot{\gamma^h_u}(0)=\frac{d}{dt}[\gamma^h_u(t),e_i]_{t=0} = X^{h,TM}_{u_i}.   
\end{equation*}
Moreover
\begin{align*}
    \pi^i_{\ast}P^{\ast}_u &= \pi^i_{\ast}\frac{d}{dt}(u\,{\rm exp}(tP))_{t=0} =
    \frac{d}{dt}(\pi^i(u\,{\rm exp}(tP)))_{t=0}\\
    &=\frac{d}{dt}(\sum_j u_j(\delta_{ji} + tP_{ji} + \textrm{higher order terms})_{t=0}\\
    &=P(u_i)^{v,TM}_{u_i},
\end{align*}
where $P(u_i) = \sum_j P_{ji}u_j$. This proves the second equality.
\end{proof}

From above lemma we have two important corollaries.
\begin{cor}
We have:
\begin{enumerate}
    \item Each $\pi^i_{\ast}$ is an isomorphism of the horizontal distribution $\mathcal{H}^{L(M)}_u$ of $L(M)$ or $\mathcal{H}^{O(M)}$ of $O(M)$ onto the horizontal distribution $\mathcal{H}^{TM}_{u_i}$ of $TM$.
    \item The $n$--tuple $(\pi^1_{\ast},\pi^2_{\ast},\ldots,\pi^n_{\ast})$ defines an isomorphism of the vertical distribution $\mathcal{V}^{L(M)}_u$ of $L(M)$ onto the product $\mathcal{V}^{TM}_{u_1}\times\mathcal{V}^{TM}_{u_2}\times\ldots\times\mathcal{V}^{TM}_{u_n}$.
\end{enumerate}
\end{cor}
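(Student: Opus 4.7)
The plan is to derive both isomorphism statements as essentially immediate consequences of Lemma \ref{lem:pii}, combined with standard facts about the dimensions and parametrizations of the horizontal and vertical distributions on $L(M)$, $O(M)$, and $TM$.

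For part (1), I would argue as follows. By construction, the map $T_xM \ni X \mapsto X^h_u \in \mathcal{H}^{L(M)}_u$ is a linear isomorphism (the same holds for $\mathcal{H}^{O(M)}_u$, since this coincides with $\mathcal{H}^{L(M)}_u$ for $u\in O(M)$), and likewise $T_xM \ni X \mapsto X^{h,TM}_{u_i} \in \mathcal{H}^{TM}_{u_i}$ is a linear isomorphism. Lemma \ref{lem:pii} tells me that under $\pi^i_\ast$ the first lift goes to the second, so $\pi^i_\ast$ restricted to $\mathcal{H}^{L(M)}_u$ factors as a composition of two isomorphisms, hence is itself an isomorphism onto $\mathcal{H}^{TM}_{u_i}$.

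For part (2), I would use a dimension count plus injectivity. The vertical space $\mathcal{V}^{L(M)}_u$ is parametrized by endomorphisms $P\in\mathrm{End}(T_xM)$ via $P \mapsto P^\ast_u$, so $\dim \mathcal{V}^{L(M)}_u = n^2$. On the other hand $\dim \mathcal{V}^{TM}_{u_i} = n$, so the product $\prod_i \mathcal{V}^{TM}_{u_i}$ also has dimension $n^2$. By Lemma \ref{lem:pii}, the $n$-tuple $(\pi^1_\ast,\ldots,\pi^n_\ast)$ sends $P^\ast_u$ to $(P(u_1)^{v,TM}_{u_1},\ldots,P(u_n)^{v,TM}_{u_n})$. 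If this vanishes, then since each vertical lift $Z \mapsto Z^{v,TM}_{u_i}$ is injective, $P(u_i)=0$ for every $i$; since $(u_1,\ldots,u_n)$ is a basis of $T_xM$, this forces $P=0$. Thus the map is injective between spaces of equal dimension, hence an isomorphism.

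I do not expect any serious obstacle here: both statements are essentially formal consequences of Lemma \ref{lem:pii} once one records the relevant dimensions and the injectivity of the vertical lift on $TM$. The only subtlety worth flagging is that part (2) is stated only for $L(M)$ (not $O(M)$), which is consistent with the fact that $\mathcal{V}^{O(M)}_u$ has dimension $\binom{n}{2}<n^2$, so the analogous map from $\mathcal{V}^{O(M)}_u$ into $\prod_i \mathcal{V}^{TM}_{u_i}$ is only injective and not surjective.
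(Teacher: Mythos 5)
Your proof is correct and follows exactly the route the paper intends: the paper gives no written proof, simply asserting that both parts follow from Lemma \ref{lem:pii}, and your argument (factoring $\pi^i_{\ast}$ through the two horizontal-lift isomorphisms for part (1), and injectivity plus the dimension count $n^2 = n\cdot n$ for part (2)) is precisely the omitted verification. Your closing remark about why part (2) fails for $O(M)$, since $\dim\mathcal{V}^{O(M)}_u = \binom{n}{2} < n^2$, is also accurate.
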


Consider on the tangent bundle $TM$ the following Sasaki-Mok metric:
\begin{align*}
    g_{TM}(X^{h,TM},Y^{h,TM}) &= g_M(X,Y), \\
    g_{TM}(X^{H,TM}, Y^{v,TM}) &= 0, \\
    g_{TM}(X^{v,TM}, Y^{v,TM} &= g_M(X,Y).
\end{align*}

\begin{cor}
    For $i=1, 2, \ldots, n$, each map $\pi^i:L(M)\to M$ is a Riemannian submersion, i.e., $\pi^i_{\ast}$ is an isometry on the horizontal distribution, where
    \begin{align*}
        V^{\pi^i} & = \{ P^{\ast} \mid P\in{\rm End}(TM), P(u_i) = 0 \},\\
        \mathcal{H}^{\pi^i} &= \mathcal{H}^{L(M)}\oplus \{ P^{\ast}\mid \textrm{$P(u_j) = 0$ for $j\neq i$}\}.
    \end{align*}
\end{cor}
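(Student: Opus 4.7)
The plan is to use Lemma \ref{lem:pii} to compute the kernel of $\pi^i_{\ast}$ directly and then to check orthogonality with respect to the Mok metric. Since the horizontal distribution $\mathcal{H}^{L(M)}$ is sent isomorphically onto $\mathcal{H}^{TM}_{u_i}$, any element of the kernel must be purely vertical. For a vertical vector $P^{\ast}_u$, the lemma gives $\pi^i_{\ast}P^{\ast}_u = P(u_i)^{v,TM}_{u_i}$, which vanishes precisely when $P(u_i)=0$. This identifies $V^{\pi^i}$ as claimed and, by the surjectivity of $\pi^i_{\ast}$ onto $T_{u_i}TM = \mathcal{H}^{TM}_{u_i}\oplus\mathcal{V}^{TM}_{u_i}$, establishes that $\pi^i$ is a submersion.

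Next, I would verify that the proposed $\mathcal{H}^{\pi^i}$ is exactly the $g_{L(M)}$-orthogonal complement of $V^{\pi^i}$. Horizontal lifts $X^h_u$ are automatically $g_{L(M)}$-orthogonal to every $Q^{\ast}_u$ by the second defining equation of the Mok metric, so $\mathcal{H}^{L(M)}\perp V^{\pi^i}$. For the vertical part, take $P^{\ast}_u$ with $P(u_j)=0$ for $j\neq i$ and $Q^{\ast}_u\in V^{\pi^i}$ (so $Q(u_i)=0$); then
\begin{equation*}
g_{L(M)}(P^{\ast}_u,Q^{\ast}_u)=\sum_j g_M(P(u_j),Q(u_j))=g_M(P(u_i),Q(u_i))=0,
\end{equation*}
since the only surviving term is killed by $Q(u_i)=0$. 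A dimension count ($\dim V^{\pi^i}=n^2-n$, $\dim\mathcal{H}^{\pi^i}=n+n$, $\dim T_uL(M)=n^2+n$) or equivalently the splitting $P = P_1+P_2$ with $P_2(u_i)=P(u_i)$, $P_2(u_j)=0$ for $j\neq i$, and $P_1=P-P_2$, shows that $\mathcal{H}^{\pi^i}\oplus V^{\pi^i}=T_uL(M)$.

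Finally I would check the isometry condition on $\mathcal{H}^{\pi^i}$. On horizontal lifts, $\pi^i_{\ast}X^h_u=X^{h,TM}_{u_i}$ and both sides have squared norm $g_M(X,X)$. On a vertical element $P^{\ast}_u$ with $P(u_j)=0$ for $j\neq i$, we have $\pi^i_{\ast}P^{\ast}_u = P(u_i)^{v,TM}_{u_i}$, and
\begin{equation*}
g_{L(M)}(P^{\ast}_u,P^{\ast}_u)=\sum_j g_M(P(u_j),P(u_j))=g_M(P(u_i),P(u_i))=g_{TM}(P(u_i)^{v,TM},P(u_i)^{v,TM}).
\end{equation*}
The cross terms between $X^h_u$ and such $P^{\ast}_u$ are zero on both sides by the defining properties of the Mok and Sasaki--Mok metrics, so $\pi^i_{\ast}$ is an isometry of $\mathcal{H}^{\pi^i}$ onto $T_{u_i}TM$.

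I do not expect any genuine obstacle here: the whole statement is essentially an unpacking of Lemma \ref{lem:pii} combined with the diagonal form of the Mok metric, so the main bookkeeping is simply keeping track of which endomorphisms are being paired with which frame vector in the sum $\sum_j g_M(P(u_j),Q(u_j))$, which neatly decouples $V^{\pi^i}$ from its complement.
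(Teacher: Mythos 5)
Your proof is correct and follows exactly the route the paper intends: the paper's own proof is the one-line remark that the corollary "follows immediately by the definition of the Riemannian metrics on $L(M)$ and $TM$ and by Lemma \ref{lem:pii}", and your argument is simply the careful unpacking of that, with the kernel computation, the orthogonality check via the diagonal form of the Mok metric, and the dimension count all in order.
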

\begin{proof}
    Follows immediately by the definition of Riemannian metrics on $L(M)$ and $TM$ and by Lemma \ref{lem:pii}.
\end{proof}

Restricting $\pi^i$ to $O(M)$ or $O(D)$, we may rewrite above fact with natural modifications. In particular, for a bundle $O(D)$ we have $\pi^A:O(D)\to D$ and $\pi^{\alpha}:O(D)\to D^{\bot}$ and, focusing on the more important case for us, the horizontal distribution of $\pi^A$ equals
\begin{equation*}
    \mathcal{H}^{\pi^A} = \mathcal{H}^{O(D)} \oplus \{P^{\ast}\mid \textrm{$P\in\alg{g}(TM)$ such that $\skal{P}{Q}=0$ for $Q$: $Q(u_A)=0$}\}.
\end{equation*}

\section{Lifts of maps}

We begin with basic definitions for maps (submersions) between Riemannian manifolds and with results concerning maps between tangent bundles \cite{kn2}. These will be crucial in our considerations.

Let us begin with notation, which will be used throughout this section. Let $\varphi:M\to N$ be a smooth map (submersion) between Riemannian manifolds $(M,g_M)$ and $(N,g_N)$. The Levi-Civita connections of $g_M, g_N$ will be denoted, respectively, by $\nabla^M$ and $\nabla^N$.

\subsection{The second fundamental form}

There is a unique connection $\nabla^{\varphi}$ in the pull-back bundle $\varphi^{-1}TN\to M$ defined by condition (see \cite{bw})
\begin{equation*}
\nabla^{\varphi}_X(Y\circ\varphi)=\nabla^N_{\varphi_{\ast}X}Y,\quad X\in T_xM, Y\in\Gamma(TN).
\end{equation*}
The {\it second fundamental form} of $\varphi$ is a bilinear form $\Pi_{\varphi}=\nabla\varphi_{\ast}$,
\begin{equation*}
\Pi_{\varphi}(X,Y)=\nabla^{\varphi}_X\varphi_{\ast}Y-\varphi_{\ast}(\nabla^M_XY)\quad X,Y\in\Gamma(TM).
\end{equation*}
$\Pi_{\varphi}$ is symmetric and tensorial in both variables. If $\varphi$ is a diffeomorphism, then $\Pi_{\varphi}$ can be rewritten in the form $\Pi_{\varphi}(X,Y)=\nabla^N_{\varphi_{\ast}X}\varphi_{\ast}Y-\varphi_{\ast}(\nabla^M_XY)$,
hence it measures how far is $\varphi$ from being affine. For any map $\varphi$, we say that $\varphi$ is {\it totally geodesic} if $\Pi_{\varphi}$ vanishes. 

\subsection{Lift of a map to the tangent bundle}
There is a connection between the geometry of the maps between the frame bundles and the geometry of maps between tangent bundles. The correlation is given by the maps $\pi^i:L(M)\to TM$, $\pi^i(u)=u_i$, which have already been mentioned. Let us elaborate on this. 

Let $\varphi:M\to N$ be a submersion. Let $\mathcal{V}^{\varphi}={\rm ker}\varphi_{\ast}$ be the vertical distribution of $\varphi$. Denote by $\mathcal{H}^{\varphi}$ its orthogonal complement. Thus, we have a decomposition $TM=\mathcal{H}^{\varphi}\oplus\mathcal{V}^{\varphi}$. Hence, each vector $X\in TM$ can be decomposed as
\begin{equation*}
    X = X^{\top} + X^{\bot},\quad X^{\top}\in\mathcal{H}^{\varphi}, X^{\bot}\in\mathcal{V}^{\varphi}.
\end{equation*}
Moreover, at each $x\in M$, the differential $\varphi_{\ast x}:\mathcal{H}^{\varphi}_x\to T_{\varphi(x)}N$ is a linear isomorphism.

Consider the second differential $\varphi_{\ast\ast}:TTM\to TTN$. This map has been extensively studied by the first author and W. Kozlowski \cite{kn1,kn2}. Let us recall important observations from \cite{kn1}, which will be needed later.

\begin{lem}\cite{kn1}\label{lem:varphiTM}
Let $Z\in TM$. Then
\begin{align*}
\varphi_{\ast\ast}X^v_Z &= (\varphi_{\ast}X)^v_{\varphi_{\ast}Z},\\
\varphi_{\ast\ast}X^h_Z &= (\varphi_{\ast}X)^h_{\varphi_{\ast}Z} + \Pi_{\varphi}(X,Z)^v_{\varphi_{\ast}Z}.
\end{align*}
\end{lem}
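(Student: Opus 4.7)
My strategy is to verify both identities by decomposing vectors in $TTN$ through the canonical projection $\pi_{TN\ast}$ and the Dombrowski connection map $K^N$. Recall that any vector $W \in T_{\varphi_{\ast}Z}TN$ is uniquely determined by the pair $(\pi_{TN\ast}W, K^N(W))$ and satisfies
\[
W = (\pi_{TN\ast}W)^{h,TN}_{\varphi_{\ast}Z} + K^N(W)^{v,TN}_{\varphi_{\ast}Z}.
\]
So it is enough to compute both components on the left-hand sides.

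The vertical identity is immediate. By definition $X^v_Z = \frac{d}{dt}(Z+tX)|_{t=0}$, and fiberwise linearity of $\varphi_{\ast}$ yields
\[
\varphi_{\ast\ast}X^v_Z = \frac{d}{dt}\varphi_{\ast}(Z+tX)\Big|_{t=0} = \frac{d}{dt}(\varphi_{\ast}Z+t\varphi_{\ast}X)\Big|_{t=0} = (\varphi_{\ast}X)^v_{\varphi_{\ast}Z}.
\]

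For the horizontal identity, the naturality square $\pi_{TN}\circ\varphi_{\ast}=\varphi\circ\pi_{TM}$ forces $\pi_{TN\ast}\varphi_{\ast\ast}X^h_Z = \varphi_{\ast}(\pi_{TM\ast}X^h_Z) = \varphi_{\ast}X$, which accounts for the horizontal summand $(\varphi_{\ast}X)^{h,TN}_{\varphi_{\ast}Z}$. For the vertical summand I realize the horizontal lift concretely: pick a curve $\gamma$ in $M$ with $\dot\gamma(0)=X$ and let $Y$ be the parallel transport of $Z$ along $\gamma$, so that $\nabla^M_X Y|_{\gamma(0)}=0$. Then $t\mapsto Y(\gamma(t))$ is a curve in $TM$ whose initial tangent coincides with $X^h_Z$, since $\pi_{TM\ast}\dot Y(0)=X$ and $K(\dot Y(0))=\nabla^M_X Y=0$ by the Dombrowski characterization. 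Pushing forward,
\[
\varphi_{\ast\ast}X^h_Z = \frac{d}{dt}\varphi_{\ast}(Y(\gamma(t)))\Big|_{t=0},
\]
which is the tangent to the vector field $W(t)=\varphi_{\ast}(Y(\gamma(t)))$ along $\beta=\varphi\circ\gamma$. Hence $K^N(\varphi_{\ast\ast}X^h_Z)$ equals the covariant derivative of $W$ along $\beta$ at $t=0$, which I identify with $\nabla^{\varphi}_X(\varphi_{\ast}Y)|_{\gamma(0)}$; the definition of the second fundamental form then gives
\[
K^N(\varphi_{\ast\ast}X^h_Z) = \nabla^{\varphi}_X(\varphi_{\ast}Y)|_{\gamma(0)} = \varphi_{\ast}(\nabla^M_X Y) + \Pi_{\varphi}(X,Y) = \Pi_{\varphi}(X,Z),
\]
where the last equality uses $\nabla^M_X Y|_{\gamma(0)}=0$ and $Y(\gamma(0))=Z$. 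Combining horizontal and vertical components yields the claimed formula.

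The main obstacle is the middle step: identifying the covariant derivative along $\beta$ of the vector field $W(t)=\varphi_{\ast}(Y(\gamma(t)))$ with $\nabla^{\varphi}_X(\varphi_{\ast}Y)|_{\gamma(0)}$. This is a routine but essential consequence of the defining property of the pullback connection $\nabla^{\varphi}$ on $\varphi^{-1}TN$ together with functoriality of pullback connections along the composition $\gamma^{-1}(\varphi^{-1}TN) = \beta^{-1}TN$; once it is in hand, the definition of $\Pi_{\varphi}$ closes the argument with no further computation.
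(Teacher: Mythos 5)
Your proposal is correct and follows essentially the same route as the paper's proof: both handle the vertical lift by fiberwise linearity of $\varphi_{\ast}$, and both treat the horizontal lift by realizing $X^h_Z$ as the tangent to a parallel vector field along a curve through $X$, then reading off the horizontal component via $\pi_{TN\ast}$ and the vertical component via the Dombrowski connection map, which produces $\Pi_{\varphi}(X,Z)$ from the defining identity of the pullback connection. Your write-up is in fact slightly more explicit than the paper's about why $K^N(\varphi_{\ast\ast}X^h_Z)=\nabla^{\varphi}_X(\varphi_{\ast}Y)$, but the argument is the same.
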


We will provide the proof of this result, since it was not included in \cite{kn1}.
\begin{proof}
The first relation is straightforward. Namely,
\begin{equation*}
\varphi_{\ast\ast}X^v_Z = \varphi_{\ast\ast}\frac{d}{dt}(t\mapsto Z+ tX) = 
\frac{d}{dt}(t\mapsto \varphi_{\ast}Z + t\varphi_{\ast}X) = (\varphi_{\ast}X)^{v,TM}_{\varphi_{\ast}Z}. 
\end{equation*}
For the second relation, let $\gamma:I\to M$ be a curve such that $\dot{\gamma}(0) = X$ and let $Z:I\to TM$ be its horizontal lift to $T_ZTM$, i.e., $\dot{Z}(0)=X^h_Z$. Then $K(\dot{Z}(0)) = (\nabla_X Z)(0)$. Moreover for a curve $U=\varphi_{\ast}\circ Z$ on $N$ we have $\dot{U}(0)=\varphi_{\ast\ast} X^{h,TM}_Z$. On the other hand,
\begin{equation*}
K(\dot{U}(0)) = (\nabla_{\varphi_{\ast}X}U)(t) = \Pi_{\varphi}(X, Z).
\end{equation*}
Since,
\begin{equation*}
\pi_{TN\ast}(\varphi{\ast\ast}X^{h,TM}_Z) = \varphi_{\ast}\pi_{TM\ast}X^{h,TM}_Z = \pi_{\ast}X,
\end{equation*}
the result follows.
\end{proof}

\begin{thm}[\cite{kn1}]\label{thm:kn}
Equip $TM$ and $TN$ with Sasaki--Mok metrics.
\begin{enumerate}
    \item If $\varphi:M\to N$ is a submersion, then $\Phi = \varphi_{\ast}:TM\to TN$ is a submersion with
    \begin{align*}
        \mathcal{V}^{\Phi}_Z &=\{X^{v,TM}_Z \mid X\in\mathcal{V}^{\varphi}\}\oplus\{X^{h,TM}_Z + ((\nabla_ZX)^{\top})^{v,TM}_Z\mid X\in\mathcal{V}^{\varphi}\},\\
        \mathcal{H}^{\Phi}_Z &= \{X^{h,TM}_Z \mid X\in\mathcal{H}^{\varphi}\}\oplus
        \{X^{v,TM}_Z + ((\nabla_Z X)^{\bot})^{h,TM}\mid X\in\mathcal{H}^{\varphi}\}.
    \end{align*}
    \item The lift $\Phi = \varphi_{\ast}:TM\to TN$ is horizontally conformal if and only if $\varphi:M\to N$ is horizontally conformal with constant dilatation and totally geodesic. Moreover, the dilatation $\lambda_{\Phi}$ of $\Phi$ is constant and equal to the dilatation $\lambda_{\varphi}$, the vertical distribution $\mathcal{V}^{\varphi}$ is totally geodesic and the horizontal distribution $\mathcal{H}^{\varphi}$ is integrable. 
\end{enumerate}
\end{thm}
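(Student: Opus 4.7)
The plan combines Lemma \ref{lem:varphiTM} with a case analysis of the Sasaki--Mok inner products, first identifying the two distributions and then extracting tensorial constraints from the horizontal conformality equation.

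For part (1), a general tangent vector at $Z\in T_xM$ takes the form $U = X^{h,TM}_Z + Y^{v,TM}_Z$, and Lemma \ref{lem:varphiTM} gives
\begin{equation*}
\Phi_{\ast} U = (\varphi_{\ast} X)^{h,TN}_{\varphi_{\ast} Z} + (\varphi_{\ast} Y + \Pi_{\varphi}(X,Z))^{v,TN}_{\varphi_{\ast} Z}.
\end{equation*}
Thus $U\in\mathcal{V}^{\Phi}_Z$ iff $X\in\mathcal{V}^{\varphi}$ and $\varphi_{\ast} Y = -\Pi_{\varphi}(X,Z)$. The symmetry of $\Pi_{\varphi}$ together with $\varphi_{\ast} X = 0$ yields $\Pi_{\varphi}(X,Z) = \Pi_{\varphi}(Z,X) = -\varphi_{\ast}((\nabla_Z X)^{\top})$, so $Y = (\nabla_Z X)^{\top} + Y'$ with $Y'\in\mathcal{V}^{\varphi}$ solves the constraint, giving the claimed decomposition of $\mathcal{V}^{\Phi}_Z$. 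The formula for $\mathcal{H}^{\Phi}_Z$ is its Sasaki--Mok orthogonal complement; the four pairwise orthogonality checks reduce to the definitions, the only subtle one using $Z(g_M(X_1,X_2)) = g_M(\nabla_Z X_1, X_2) + g_M(X_1, \nabla_Z X_2) = 0$ for $X_1, X_2$ taken as local sections of $\mathcal{H}^{\varphi}$ and $\mathcal{V}^{\varphi}$ respectively. A dimension count closes the direct sum argument and shows $\Phi$ is indeed a submersion.

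For part (2), I compute $\Phi_{\ast}$ on both kinds of generators of $\mathcal{H}^{\Phi}_Z$ and write $g_{TN}(\Phi_{\ast} U, \Phi_{\ast} V) = \lambda_{\Phi}^2\, g_{TM}(U,V)$ in three cases. The ``purely horizontal lift'' case $U = X_1^{h,TM}_Z$, $V = X_2^{h,TM}_Z$ with $X_1, X_2\in\mathcal{H}^{\varphi}$ gives
\begin{equation*}
\lambda_{\Phi}^2(Z)\, g_M(X_1,X_2) = g_N(\varphi_{\ast} X_1, \varphi_{\ast} X_2) + g_N(\Pi_{\varphi}(X_1,Z), \Pi_{\varphi}(X_2,Z)),
\end{equation*}
so $Z=0$ extracts horizontal conformality of $\varphi$ with $\lambda_{\varphi}^2(x) = \lambda_{\Phi}^2(0_x)$. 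The ``mixed vs.\ pure'' case reduces to $g_N(\Pi_{\varphi}(X_1,Z), \varphi_{\ast} X_2) + g_N(\Pi_{\varphi}(X_1,Z), \Pi_{\varphi}((\nabla_Z X_2)^{\bot}, Z)) = 0$; scaling $Z\mapsto sZ$ separates the linear and cubic parts and forces $g_N(\Pi_{\varphi}(X_1,Z), \varphi_{\ast} X_2) = 0$ for all $X_1, X_2\in\mathcal{H}^{\varphi}$ and $Z\in TM$. Since $\varphi_{\ast}(\mathcal{H}^{\varphi}) = TN$, this gives $\Pi_{\varphi}(X, \cdot) \equiv 0$ for $X\in\mathcal{H}^{\varphi}$, and reinjection into the pure case yields $\lambda_{\Phi}(Z) = \lambda_{\varphi}(\pi_{TM}(Z))$. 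The ``mixed vs.\ mixed'' case with $Z\in\mathcal{H}^{\varphi}$ then collapses to $\lambda_{\varphi}^2\, g_M((\nabla_Z X_1)^{\bot}, (\nabla_Z X_2)^{\bot}) = 0$, establishing total geodesicity of $\mathcal{H}^{\varphi}$, while the same case with $Z\in\mathcal{V}^{\varphi}$ and a suitable choice of extensions making $(\nabla_Z X_i)^{\bot}$ range over $\mathcal{V}^{\varphi}$ forces $\Pi_{\varphi}|_{\mathcal{V}^{\varphi}\times\mathcal{V}^{\varphi}} = 0$, completing $\Pi_{\varphi}\equiv 0$.

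The ``Moreover'' claims follow quickly: $\mathcal{V}^{\varphi}$ is totally geodesic from $\Pi_{\varphi}(Y_1,Y_2) = -\varphi_{\ast}(\nabla_{Y_1}Y_2)^{\top} = 0$, integrability of $\mathcal{H}^{\varphi}$ is in fact total geodesicity from the case analysis, and the resulting local product splitting $M\cong M_1\times M_2$ aligned with $\mathcal{H}^{\varphi}\oplus\mathcal{V}^{\varphi}$ reduces $\varphi$ to a totally geodesic horizontally conformal diffeomorphism $M_1\to N$, whose dilatation is classically constant; combined with $\lambda_{\Phi}(Z) = \lambda_{\varphi}(\pi_{TM}(Z))$ this gives $\lambda_{\Phi}$ constant and equal to $\lambda_{\varphi}$. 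The converse is routine: under $\Pi_{\varphi}=0$ and constant $\lambda_{\varphi}$ all $\Pi_{\varphi}$-dependent corrections in the case analysis vanish, and $\Phi$ inherits horizontal conformality with dilatation $\lambda_{\varphi}$. The main technical hurdle is isolating $\Pi_{\varphi}|_{\mathcal{V}^{\varphi}\times\mathcal{V}^{\varphi}}$, which requires a careful choice of local extensions so that $(\nabla_Z X)^{\bot}$ surjects onto $\mathcal{V}^{\varphi}$ at the chosen base point.
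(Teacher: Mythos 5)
This theorem is imported from \cite{kn1} and the paper itself gives no proof of it, so there is nothing internal to compare against; I can only assess your reconstruction on its own terms. Part (1) and most of part (2) are correct and follow the natural route: apply Lemma \ref{lem:varphiTM} to a general vector $X^{h,TM}_Z+Y^{v,TM}_Z$, read off $\mathcal{V}^{\Phi}_Z$, take the Sasaki--Mok orthogonal complement (your four orthogonality checks, including the one using $Z(g_M(X_1,X_2))=0$, are right), and then expand the conformality identity on the three types of pairs from $\mathcal{H}^{\Phi}_Z$, separating powers of $s$ after $Z\mapsto sZ$. The extraction of $g_N(\Pi_{\varphi}(X_1,Z),\varphi_{\ast}X_2)=0$, hence $\Pi_{\varphi}(X,\cdot)=0$ for horizontal $X$, and the identification $\lambda_{\Phi}(Z)=\lambda_{\varphi}(\pi_{TM}(Z))$ are sound.

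The flawed step is the one you yourself flag as the main hurdle: killing $\Pi_{\varphi}|_{\mathcal{V}^{\varphi}\times\mathcal{V}^{\varphi}}$ by ``a suitable choice of local extensions so that $(\nabla_ZX)^{\bot}$ surjects onto $\mathcal{V}^{\varphi}$''. There is no such choice: for $X$ a section of $\mathcal{H}^{\varphi}$ with $X(x)$ fixed and $Z\in T_xM$ fixed, $(\nabla_ZX)^{\bot}$ is independent of the extension (if $X(x)=0$ and $X=\sum_A f_AE_A$ with $E_A\in\mathcal{H}^{\varphi}$, then $\nabla_ZX=\sum_A(Zf_A)E_A(x)$ is horizontal), and it must be, since otherwise the displayed description of $\mathcal{H}^{\Phi}_Z$ would not define a subspace. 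So $(\nabla_ZX)^{\bot}$ is a fixed tensor --- essentially the adjoint of the second fundamental form of the fibers --- and you cannot make it range over $\mathcal{V}^{\varphi}$ at will. The repair is already contained in your own mixed-vs-mixed case with $Z\in\mathcal{V}^{\varphi}$: after using $\Pi_{\varphi}(X,\cdot)=0$ for horizontal $X$ and $\lambda_{\Phi}=\lambda_{\varphi}\circ\pi_{TM}$, separating the $s^4$ and $s^2$ parts forces first $\Pi_{\varphi}((\nabla_ZX_i)^{\bot},Z)=0$ and then $g_M((\nabla_ZX_1)^{\bot},(\nabla_ZX_2)^{\bot})=0$, i.e.\ $(\nabla_ZX)^{\bot}=0$ for all vertical $Z$ and horizontal sections $X$; by metric compatibility, $g_M((\nabla_ZX)^{\bot},W)=-g_M(X,(\nabla_ZW)^{\top})$ for vertical $W$, this is equivalent to the fibers being totally geodesic, which is exactly $\Pi_{\varphi}|_{\mathcal{V}^{\varphi}\times\mathcal{V}^{\varphi}}=0$. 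With that substitution the argument closes; your derivation of the constancy of $\lambda_{\varphi}$ via a local de Rham splitting is heavier than necessary (the conformal-change formula for the Levi-Civita connection applied on a horizontal leaf gives it directly once $\mathcal{H}^{\varphi}$ is known to be totally geodesic), but it is not wrong.
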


\subsection{Lift of a submersion to the frame bundle}

Now we introduce the main object of interest. We begin with the simplest case.
Let $\varphi:M\to N$ be a (local) diffeomorphism. Then its lift $:L\varphi:L(M)\to L(N)$ is defined as follows
\begin{equation*}
L\varphi(u)=(\varphi_{\ast}u_1,\ldots,\varphi_{\ast}u_n),\quad u=(u_1,\ldots,u_n)\in L(M).
\end{equation*}
We may alternatively consider $L\varphi:O(M)\to L(N)$. It turns out that it is possible to define the lift of any submersion. Let $\varphi:M\to N$ be a submersion and $\mathcal{H}^{\varphi}\subset TM$ be its horizontal distribution. We may consider the lift $L\varphi:L(\mathcal{H}^{\varphi})\to L(N)$ given in a natural way, namely,
\begin{equation*}
L\varphi(u)=(\varphi_{\ast}u_1,\ldots,\varphi_{\ast}u_k),\quad u=(u_1,\ldots,u_n)\in L(\mathcal{H}^{\varphi}),
\end{equation*}
where $k=\dim \mathcal{H}^{\varphi}$. We may alternatively restrict to the bundle $O(\mathcal{H}^{\varphi})$, i.e., consider $L\varphi:O(\mathcal{H}^{\varphi}\to L(N)$. In fact we will deal with the restriction of $L\varphi$ to $O(\mathcal{H}^{\varphi})$ only.

By Lemma \ref{lem:varphiTM} we have an important result. Firstly, let us introduce necessary objects and notation:
\begin{enumerate}
    \item For a tangent vector $X\in TM$, we will write
    \begin{equation*}
        X = X^{\top} + X^{\bot}
    \end{equation*}
    according to the decomposition $TM = \mathcal{H}^{\varphi}\oplus\mathcal{V}^{\varphi}$.
    \item  Instead of writing $X^{h,\mathcal{H}^{\varphi}}$ for a horizontal lift of a vector $X$ to $L(\mathcal{H}^{\varphi})$, we will write $X^{h,\varphi}$.
    \item For $P_0\in{\rm End}(\mathcal{H}^{\varphi})$, the push-forward $\varphi_{\ast}P_0\in{\rm End}(TN)$ is well defined by the condition
    \begin{equation*}
    (\varphi_{\ast}P_0)(\varphi_{\ast}X) = \varphi_{\ast}(P_0(X)),\quad X\in\mathcal{H}^{\varphi}.
    \end{equation*}
    \item Let $A_Y\in{\rm End}(\mathcal{H}^{\varphi})$ for $Y\in \mathcal{V}^{\varphi}$ be given by
    \begin{equation*}
    A_Y(X) = S_XY = (\nabla_XY)^{\top},\quad X\in\mathcal{H}^{\varphi}.
    \end{equation*}
    Notice that, 
    \begin{equation*}
    \varphi_{\ast}A_Y(X) = \Pi_{\varphi}(X,Y) = \Pi_{\varphi}(Y,X).
    \end{equation*}
    \item The second fundamental form $\Pi_{\varphi}$ for fixed first argument $\Pi_{\varphi}(X,\cdot)$ is a linear map from $TM$ to $TN$ and since $\varphi_{\ast}:\mathcal{H}^{\varphi}\to TN$ is an isomorphism, we may define $(\Pi_{\varphi})_X\in{\rm End}(\mathcal{H}^{\varphi})$ by the property:
    \begin{equation*}
    \varphi_{\ast}((\Pi_{\varphi})_XY)=\Pi_{\varphi}(X,Y).
    \end{equation*}
    In other words,
    \begin{equation*}
    (\Pi_{\varphi})_XY = \varphi_{\ast}^{-1}(\nabla_X\varphi_{\ast}Y) - (\nabla_XY)^{\top},\quad Y\in\mathcal{H}^{\varphi}.
    \end{equation*}
\end{enumerate}

\begin{lem}\label{lem:Lphi}
Let $u=(u_1,\ldots,u_n)\in O(\mathcal{H}^{\varphi})$, $X\in \mathcal{H}^{\varphi}$, $Y\in\mathcal{V}^{\varphi}$ and $P\in\alg{g}(TM)$. Then
\begin{align*}
(L\varphi)_{\ast}X^{h,\varphi}_u &= (\varphi_{\ast}X)^h + (\varphi_{\ast}(\Pi_{\varphi})_X)^{\ast}_{L\varphi(u)},\\
(L\varphi)_{\ast}Y^{h,\varphi}_u &= (\varphi_{\ast} A_Y)^{\ast}_{L\varphi(u)},\\
(L\varphi)_{\ast}P^{\ast}_u &= (\varphi_{\ast}P^{\top})^{\ast}_{L\varphi(u)}.
\end{align*}
\end{lem}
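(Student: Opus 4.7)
The plan is to reduce each of the three identities to a computation in the tangent bundle, by composing with the projections $\pi^A : O(\mathcal{H}^\varphi) \to TM$, $\pi^A(u) = u_A$, and $\pi^A : L(N) \to TN$ for $A = 1,\ldots, k$. Because $u_A \in \mathcal{H}^\varphi$ for $A \le k$ and $L\varphi(u) = (\varphi_* u_1,\ldots,\varphi_* u_k)$, we have the intertwining relation $\pi^A \circ L\varphi = \varphi_* \circ \pi^A$, and consequently $\pi^A_* \circ (L\varphi)_* = \varphi_{**} \circ \pi^A_*$. By the corollary following Lemma~\ref{lem:pii}, a tangent vector in $T_{L\varphi(u)}L(N)$ is determined by its images under $\pi^A_*$ for $A=1,\ldots,k$: the horizontal part is recovered from any single $\pi^A_*$, and the vertical part from the full $k$-tuple. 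Hence, to prove each identity it suffices to verify it after applying $\pi^A_*$, where Lemma~\ref{lem:pii} converts horizontal and vertical lifts in frame bundles to horizontal and vertical lifts in tangent bundles, and Lemma~\ref{lem:varphiTM} propagates them through $\varphi_{**}$.

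For the first identity, I would use Lemma~\ref{lem:XhD} to write $X^{h,\varphi}_u = X^h_u + (S_X)^*_u$, so Lemma~\ref{lem:pii} gives
\begin{equation*}
\pi^A_* X^{h,\varphi}_u = X^{h,TM}_{u_A} + (S_X u_A)^{v,TM}_{u_A}.
\end{equation*}
Since both $X$ and $u_A$ lie in $\mathcal{H}^\varphi$, the difference tensor produces $S_X u_A = (\nabla_X u_A)^\bot \in \mathcal{V}^\varphi$, which is annihilated by $\varphi_*$. Applying $\varphi_{**}$ via Lemma~\ref{lem:varphiTM} then yields $(\varphi_* X)^{h,TN}_{\varphi_* u_A} + \Pi_\varphi(X, u_A)^{v,TN}_{\varphi_* u_A}$, and the defining property $\varphi_*((\Pi_\varphi)_X u_A) = \Pi_\varphi(X, u_A)$ shows this equals $\pi^A_*$ of the claimed right-hand side. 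The second identity is analogous: the $(\varphi_* Y)^{h,TN}$-term vanishes because $Y \in \mathcal{V}^\varphi$, the contribution $S_Y u_A = (\nabla_Y u_A)^\bot$ is again annihilated by $\varphi_*$, and symmetry of $\Pi_\varphi$ together with $\varphi_*(A_Y u_A) = \Pi_\varphi(u_A, Y)$ matches the remaining vertical vectors. The third identity is the most direct: Lemma~\ref{lem:pii} gives $\pi^A_* P^*_u = P(u_A)^{v,TM}_{u_A}$; since $P \in \alg{g}(TM)$ preserves $\mathcal{H}^\varphi$ we have $P u_A = P^\top u_A \in \mathcal{H}^\varphi$, so $\varphi_{**}$ produces $\varphi_*(P^\top u_A)^{v,TN}_{\varphi_* u_A}$, which agrees with $\pi^A_*$ of $(\varphi_* P^\top)^*_{L\varphi(u)}$.

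The main obstacle is careful bookkeeping of the $\mathcal{H}^\varphi/\mathcal{V}^\varphi$ decompositions: one needs the $S$-tensor terms to land in $\mathcal{V}^\varphi$ so they disappear under $\varphi_*$, and one must check that the endomorphisms $(\Pi_\varphi)_X$, $A_Y$, and $P^\top$ on $\mathcal{H}^\varphi$ push forward to precisely the $\mathrm{End}(TN)$-valued objects that produce the fundamental vertical vectors at $L\varphi(u)$ written on the right-hand side.
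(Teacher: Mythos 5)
Your proposal is correct and follows essentially the same route as the paper's proof: compose with the projections $\pi^A$, use $\pi^A\circ L\varphi=\varphi_{\ast}\circ\pi^A$ together with Lemmas \ref{lem:XhD}, \ref{lem:pii} and \ref{lem:varphiTM}, and observe that the $S$-tensor terms land in $\mathcal{V}^{\varphi}$ and are killed by $\varphi_{\ast}$. Your explicit remark that a tangent vector of $L(N)$ is determined by the full $k$-tuple of images under $\pi^A_{\ast}$ is a point the paper leaves implicit, but it is the same argument.
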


\begin{proof}
Fix $A=1, 2, \ldots, k$. Since $\pi^A\circ L\varphi = \varphi_{\ast}\circ \pi^A$, by Lemma \ref{lem:varphiTM}, Lemma \ref{lem:XhD} and Lemma \ref{lem:pii}, it follows that for any $X\in TM$
\begin{align*}
    \pi^A_{\ast}((L\varphi)_{\ast}X^{h,\varphi}_u) &= (\varphi_{\ast}\circ\pi^A)_{\ast}X^{h,\varphi}_u = \varphi_{\ast\ast}(\pi^A_{\ast}X^{h,\varphi}_u)=\varphi_{\ast\ast}(\pi^A_{\ast}(X^h_u + (S_X)^{\ast}_u)\\
    &= \varphi_{\ast\ast}X^{h,TM}_{u_A} + \varphi_{\ast\ast}(S_X(u_A)))^{v,TM}_{u_A}\\
    &=(\varphi_{\ast}X)^{h,TN}_{\varphi_{\ast}(u_A)} + \Pi_{\varphi}(X,u_A)^{v,TN}_{\varphi_{\ast}(u_A)} + (\varphi_{\ast}S_X(u_A))^{v,TN}_{\varphi_{\ast}(u_A)}.
\end{align*}
Notice that $S_X(u_A)\in\mathcal{V}^{\varphi}$. Thus $\varphi_{\ast}S_X(u_A)=0$. By above considerations we get the first two equalities.

For the third one, by Lemma \ref{lem:varphiTM}, we have
\begin{equation*}
    \pi^A_{\ast}((L\varphi)_{\ast}P^{\ast}_u) = \varphi_{\ast\ast}(\pi^A_{\ast}P^{\ast}_u) = \varphi_{\ast\ast} (P(u_A))^{v,TM}_{u_A} = (\varphi_{\ast}P(u_A))^{v,TN}_{\varphi_{\ast}(u_A)}.
\end{equation*}
Now, if $P(u_A)$ is vertical, then $\varphi_{\ast}P(u_A)=0$. Thus, it is sufficient to restrict to $P^{\top}$ and use Lemma \ref{lem:pii}. This proves the second relation.
\end{proof}

Define an endomorphism $\mathcal{W}$ of $TM$ by (compare \cite{n3})
\begin{equation*}
    \mathcal{W}(X) = X + \sum_i \skal{S_{e_i}}{S_X}e_i,
\end{equation*}
where $\skal{P}{Q} = \sum_i g_M(P(e_i), Q(e_i))$ is a natural scalar product of two endomorphisms. Notice that 
\begin{equation*}
    \skal{P}{Q} = g_{O(M)}(P^{\ast}_u, Q^{\ast}_u),\quad u\in O(M),
\end{equation*}
where $P,Q\in\alg{so}(TM)$ are skew--symmetric endomorphisms.
\begin{lem}\label{lem:S}
    $\mathcal{S}$ is an isomorphism such that
    \begin{equation*}
        g_M(X,\mathcal{W}(Y)) = g_{O(M)}(X^{h,\varphi}_u,Y^{h,\varphi}_u),\quad X,Y\in TM, u\in O(M).
    \end{equation*}
\end{lem}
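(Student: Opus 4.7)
The plan is to reduce the statement to a direct computation of the Mok inner product of two horizontal lifts for the connection $\nabla^D$, using the decomposition provided by Lemma \ref{lem:XhD}. More precisely, setting $D = \mathcal{H}^{\varphi}$, Lemma \ref{lem:XhD} gives $X^{h,\varphi}_u = X^h_u + (S_X)^{\ast}_u$, and similarly for $Y$. Plugging this into the defining formulas of the Mok metric $g_{L(M)}$ (which restricts to $g_{O(M)}$), the horizontal--vertical cross terms vanish, the purely horizontal term contributes $g_M(X,Y)$, and the purely vertical term contributes
\begin{equation*}
g_{O(M)}((S_X)^{\ast}_u,(S_Y)^{\ast}_u) = \sum_i g_M(S_X(u_i), S_Y(u_i)).
\end{equation*}
Since $u=(u_1,\dots,u_n)$ is an orthonormal frame and the expression $\sum_i g_M(P(e_i),Q(e_i))$ is independent of the orthonormal frame (it is the Hilbert--Schmidt pairing of $P$ and $Q$), this equals $\skal{S_X}{S_Y}$. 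In particular the right-hand side of the claimed identity does not depend on the choice of $u$.

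Next I would verify the identification with $g_M(X,\mathcal{W}(Y))$ by a one-line computation: expanding $\mathcal{W}$ along an orthonormal basis $\{e_i\}$ gives
\begin{equation*}
g_M(X,\mathcal{W}(Y)) = g_M(X,Y) + \sum_i g_M(X,e_i)\skal{S_{e_i}}{S_Y} = g_M(X,Y) + \skal{S_X}{S_Y},
\end{equation*}
where in the last step I use linearity of $P\mapsto S_P$ in its subscript together with $\sum_i g_M(X,e_i) e_i = X$. Combining the two calculations yields the claimed equality.

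For the isomorphism part, I observe that the auxiliary endomorphism $T(X) = \sum_i \skal{S_{e_i}}{S_X}e_i$ defined by $\mathcal{W}=I+T$ is $g_M$-symmetric (because $g_M(X,T(Y))=\skal{S_X}{S_Y}=g_M(T(X),Y)$) and positive semidefinite (since $g_M(X,T(X)) = \skal{S_X}{S_X}\ge 0$). Hence
\begin{equation*}
g_M(X,\mathcal{W}(X)) = g_M(X,X) + \skal{S_X}{S_X} \ge g_M(X,X) > 0
\end{equation*}
for every $X\neq 0$, so $\mathcal{W}$ is positive definite and, in particular, an isomorphism.

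I do not expect any serious obstacle; the only subtlety worth flagging explicitly is the frame-independence step in the vertical pairing, which is what makes the right-hand side of the claimed identity a bona fide tensor on $M$ rather than an expression living on $O(M)$.
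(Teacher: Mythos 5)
Your proposal is correct and follows essentially the same route as the paper: both expand $X^{h,\varphi}_u = X^h_u + (S_X)^{\ast}_u$ via Lemma \ref{lem:XhD}, use orthogonality of horizontal and vertical parts in the Mok metric to get $g_M(X,Y)+\skal{S_X}{S_Y}$, and match this with $g_M(X,\mathcal{W}(Y))$. Your positive-definiteness argument for invertibility ($g_M(X,\mathcal{W}(X))=|X|^2+\skal{S_X}{S_X}>0$ for $X\neq 0$) is a slightly more direct version of the paper's observation that $\mathcal{W}(X)=0$ forces $X^{h,\varphi}=0$ and hence $X=0$.
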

\begin{proof}
    We have
    \begin{align*}
        g_M(\mathcal{W}(X),Y) &=g(X,Y) + \sum_i \skal{S_{e_i}}{S_X}g(e_i,Y)\\
        &= g_M(X,Y) + \skal{S_X}{S_Y} = g_{O(M)}(X^h,Y^h) + g_{O(M)}(S_X^{\ast}, S_Y^{ast})\\
        &=g_{O(M)}(X^h + S_X^{\ast}, Y^h + S_Y^{\ast})\\
        &=g_{O(M)}(X^{h,\varphi}, Y^{h,\varphi}).
    \end{align*}
    Moreover, if $\mathcal{W}(X) = 0$, then $h(X^{h,\varphi}, Y^{h,\varphi}) = 0$ for all $Y\in TM$. Thus $X=0$, which proves that $\mathcal{W}$ is an isomorphism.
\end{proof}

For an endomorphism $C\in{\rm End}(\mathcal{H}^{\varphi})$ let
\begin{equation*}
    {\rm div}^{\bot}C = \sum_A (\nabla_{e_A}C(e_A))^{\bot} \in\mathcal{V}^{\varphi}.
\end{equation*}
\begin{lem}\label{lem:divC}
    For $X\in \mathcal{V}^{\varphi}$ and $C\in{\rm End}(\mathcal{H}^{\varphi})$ we have
    \begin{equation*}
        \skal{A_X}{C} = -g_M(X,{\rm div}^{\bot}C). 
    \end{equation*}
\end{lem}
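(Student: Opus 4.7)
The plan is to unwind both sides using an orthonormal frame $e_1,\dots,e_k$ of $\mathcal{H}^{\varphi}$ (augmented by $e_{k+1},\dots,e_n$ in $\mathcal{V}^{\varphi}$), apply metric compatibility of $\nabla$, and then use that $C(e_A)$ lies in $\mathcal{H}^{\varphi}$ while $X$ lies in $\mathcal{V}^{\varphi}$ to handle the orthogonal projections.

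Concretely, I would first observe that $A_X$ and $C$ are endomorphisms of $\mathcal{H}^{\varphi}$, so the scalar product in the lemma reduces to a sum over the $\mathcal{H}^{\varphi}$-frame:
\begin{equation*}
\skal{A_X}{C} = \sum_A g_M(A_X(e_A), C(e_A)) = \sum_A g_M\bigl((\nabla_{e_A}X)^{\top}, C(e_A)\bigr).
\end{equation*}
Since $C(e_A)\in\mathcal{H}^{\varphi}$ and the $\mathcal{V}^{\varphi}$-component of $\nabla_{e_A}X$ is $g_M$-orthogonal to $C(e_A)$, the projection $(\cdot)^{\top}$ may be dropped, giving
\begin{equation*}
\skal{A_X}{C} = \sum_A g_M(\nabla_{e_A}X, C(e_A)).
\end{equation*}

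Next, since $S$ is tensorial in its first slot, $A_X$ depends only on the pointwise value of $X$. This lets me extend $X$ to a local section of $\mathcal{V}^{\varphi}$ around the point of interest. With such an extension, $g_M(X, C(e_A))\equiv 0$ because $X\perp \mathcal{H}^{\varphi}$, so metric compatibility of $\nabla$ gives
\begin{equation*}
0 = e_A\bigl(g_M(X, C(e_A))\bigr) = g_M(\nabla_{e_A}X, C(e_A)) + g_M(X, \nabla_{e_A}C(e_A)).
\end{equation*}
Summing over $A$ and substituting into the previous expression yields
\begin{equation*}
\skal{A_X}{C} = -\sum_A g_M(X, \nabla_{e_A}C(e_A)) = -g_M\!\left(X,\sum_A \nabla_{e_A}C(e_A)\right).
\end{equation*}

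Finally, because $X\in\mathcal{V}^{\varphi}$, only the $\mathcal{V}^{\varphi}$-component of the vector in the second argument contributes, and by the definition of ${\rm div}^{\bot}C$ this equals $-g_M(X,{\rm div}^{\bot}C)$, which is the desired identity. The only subtlety is justifying the extension of $X$ as a vertical field, but this is immediate from tensoriality of both sides in $X$, so I do not expect any genuine obstacle; the argument is essentially integration by parts against the local $\mathcal{V}^{\varphi}$-extension.
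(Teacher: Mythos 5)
Your proof is correct and takes essentially the same route as the paper's: expand $\skal{A_X}{C}$ over a horizontal orthonormal frame, drop the $(\cdot)^{\top}$ projection because $C(e_A)\in\mathcal{H}^{\varphi}$, and integrate by parts via metric compatibility using $g_M(X,C(e_A))=0$. The paper merely compresses that metric-compatibility step into a single line of the displayed computation, so there is nothing further to add.
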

\begin{proof}
    By definition of $A_X$ we have
    \begin{align*}
        \skal{A_X}{C} &= \sum_A g_M(A_X(e_A), C(e_A)) \\
        &= \sum_A g_M(\nabla_{e_A}X, C(e_A)) \\
        &= -\sum_A g_M(X,(\nabla_{e_A}C(e_A))^{\bot}) \\
        &= -g_M(X, {\rm div}^{\bot}C).
    \end{align*}
\end{proof}

The consequence of lemma \ref{lem:Lphi} is the following important observation.

\begin{prop}\label{prop:Lvarphi}
If $\varphi:M\to N$ is a submersion, then $L\varphi:O(\mathcal{H}^{\varphi})\to L(N)$ is a submersion. Moreover,
\begin{align*}
    \mathcal{V}^{L\varphi}_u &= \{X^{h,\varphi}_u - (A_X)_u^{\ast} \mid X\in\mathcal{V}^{\varphi}\} \oplus \{P^{\ast}_u\mid P\in{\rm End}(\mathcal{V}^{\varphi})\cap \alg{g}(TM)\}, \\
        \mathcal{H}^{L\varphi}_u &= \{\mathcal{W}^{-1}(Y))^{h,\varphi}_u \mid Y\in\mathcal{H}^{\varphi}\} \oplus \{(\mathcal{W}^{-1}({\rm div}^{\bot}C))^{h,\varphi} - C^{\ast}\mid C\in{\rm End}(\mathcal{H}^{\varphi})\cap\alg{g}(TM)\}.
    \end{align*}
\end{prop}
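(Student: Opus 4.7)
The plan is to verify first that the claimed $\mathcal{V}^{L\varphi}$ is contained in $\ker(L\varphi)_*$ by direct application of Lemma \ref{lem:Lphi}, then to identify the claimed $\mathcal{H}^{L\varphi}$ as the orthogonal complement of $\mathcal{V}^{L\varphi}$ in $TO(\mathcal{H}^{\varphi})$ under the Mok--type metric. A dimension count confirms that the two pieces exhaust the tangent space, forcing equality $\mathcal{V}^{L\varphi}=\ker(L\varphi)_*$ and showing that $(L\varphi)_*$ has constant rank; hence $L\varphi$ is a submersion onto its image.

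For the kernel inclusion, take $X\in\mathcal{V}^{\varphi}$. The second identity of Lemma \ref{lem:Lphi} gives $(L\varphi)_*X^{h,\varphi}_u=(\varphi_*A_X)^*_{L\varphi(u)}$. Extending $A_X\in{\rm End}(\mathcal{H}^{\varphi})$ to ${\rm End}(TM)$ by zero on $\mathcal{V}^{\varphi}$, the restriction $(A_X)^{\top}$ equals $A_X$, so the third identity yields $(L\varphi)_*(A_X)^*_u=(\varphi_*A_X)^*_{L\varphi(u)}$ and the difference $X^{h,\varphi}_u-(A_X)^*_u$ lies in the kernel. For $P\in{\rm End}(\mathcal{V}^{\varphi})\cap\alg{g}(TM)$ one has $P^{\top}=0$, hence $(L\varphi)_*P^*_u=0$.

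For the horizontal part, the dimensions match:
\[
(n-k)+\binom{n-k}{2}+k+\binom{k}{2}=n+\binom{k}{2}+\binom{n-k}{2}=\dim O(\mathcal{H}^{\varphi}),
\]
so it suffices to prove orthogonality of the two proposed subspaces under the metric on $O(\mathcal{H}^{\varphi})$; both will then automatically realize the full kernel and its orthogonal complement. The orthogonality breaks into four cross-type pairings. Using Lemma \ref{lem:S} and the self-adjointness of $\mathcal{W}$, for $Y\in\mathcal{H}^{\varphi}$ and $X\in\mathcal{V}^{\varphi}$ one has
\[
g\bigl(\mathcal{W}^{-1}(Y)^{h,\varphi},\, X^{h,\varphi}\bigr)=g_M(Y,X)=0,
\]
and the remaining pairings of the first-type horizontal generator with fundamental vertical vectors vanish by horizontal/vertical orthogonality of the Mok metric. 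For the second-type horizontal generator the critical pairing is
\[
g\bigl((\mathcal{W}^{-1}({\rm div}^{\bot}C))^{h,\varphi}-C^*,\, X^{h,\varphi}-(A_X)^*\bigr)=g_M({\rm div}^{\bot}C,X)+\skal{C}{A_X},
\]
which vanishes exactly by Lemma \ref{lem:divC}. Pairings of the second-type generator with $P^*$ reduce to $-\skal{C}{P}$, which is zero because $C$ is supported on $\mathcal{H}^{\varphi}$ while $P$ is supported on $\mathcal{V}^{\varphi}$, so every term $g_M(C(e_i),P(e_i))$ vanishes.

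The main obstacle is the third pairing above. The ${\rm div}^{\bot}C$ correction built into the generator of $\mathcal{H}^{L\varphi}$ is precisely what is needed to cancel the vertical--vertical term $\skal{C}{A_X}$ arising from pairing $-C^*$ with $-(A_X)^*$, and the identity of Lemma \ref{lem:divC} provides exactly the right sign for this cancellation. The other three orthogonality checks follow directly from Lemma \ref{lem:S} and the definition of the Mok metric.
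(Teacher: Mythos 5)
Your proof is correct and follows essentially the same route as the paper: the vertical distribution is read off from Lemma \ref{lem:Lphi}, and the horizontal distribution is identified by checking orthogonality to the vertical one via Lemma \ref{lem:S} and Lemma \ref{lem:divC}, with the $\mathrm{div}^{\bot}C$ correction cancelling $\skal{C}{A_X}$ exactly as in the paper. Your explicit kernel computation and dimension count only make explicit what the paper leaves implicit.
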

\begin{proof}
    Firstly, notice that if $\varphi$ has maximal rank, so does $L\varphi$, by Lemma \ref{lem:Lphi}. Moreover, by the same lemma, we obtain the formula for the vertical distribution of $L\varphi$. 

    Let $X\in\mathcal{V}^{\varphi}$ and $P\in{\rm End}(\mathcal{V}^{\varphi})$. For the first part, for $Y\in\mathcal{H}^{\varphi}$, by Lemma \ref{lem:S} we have
    \begin{equation*}
        g_{O(M)}((\mathcal{W}^{-1}(Y))^{h,\varphi}, X^{h,\varphi} - (A_X)^{\ast} + P^{\ast}) = g_{O(M)}((\mathcal{W}^{-1}(Y))^{h,\varphi}, X^{h,\varphi}) = g_M(X,Y) = 0.
    \end{equation*}
    For the second part, for $C\in{\rm End}(\mathcal{H}^{\varphi})$, by Lemma \ref{lem:divC} we have
    \begin{align*}
        g{O(M)}((\mathcal{W}^{-1}({\rm div}^{\bot}C))^{h,\varphi} - C^{\ast}, X^{h,\varphi} - (A_X)^{\ast} + P^{\ast}) &=  g_{O(M)}((\mathcal{W}^{-1}({\rm div}^{\bot}C))^{h,\varphi}, X^{h,\varphi})\\
        &+ g_M(C^{\ast},A_X)\\ 
        &= g_M({\rm div}^{\bot}C), X) + \skal{C}{A_X}\\ 
        &= 0.
    \end{align*}
    Thus $\mathcal{H}^{L\varphi}$ is orthogonal to $\mathcal{V}^{L\varphi}$.
\end{proof}

\section{Main results} 

We study conformality and harmonicity of the lift $L\varphi$. We begin with conformality, since the latter conditions are partial consequences of the former.

\subsection{Conformality of a lift}

Let $\varphi:M\to N$ be a submersion between Riemannian manifolds and let $L\varphi:O(\mathcal{H}^{\varphi})\to L(N)$ be its lift to frame bundles. Equip $O(M)$ and $L(N)$ with natural Riemannian metrics considered in the previous section.

We say that a submersion $\varphi:M\to N$ is a {\it (horizontally) conformal map} if there is a smooth positive function $\lambda$ such that
\begin{equation*}
    g_N(\varphi_{\ast}X,\varphi_{\ast}Y) = \lambda g_M(X,Y),\quad X,Y\in\mathcal{H}^{\varphi}.
\end{equation*}
Function $\lambda$ is often called the {\it dilatation}. If $L\varphi$ is a horizontally conformal map its dilatation will be denoted by $\Lambda$.

Let us recall a basic but important fact about horizontally conformal submersions: If $\varphi:M\to N$ and $\psi:N\to F$ are horizontally conformal, then $\psi\circ\varphi:M\to F$ is horizontally conformal with dilatation $\lambda_{\psi\circ\varphi} = \lambda_{\psi}\lambda_{\varphi}$. Moreover, the vertical distribution of $\psi\circ\varphi$ equals
\begin{equation*}
    \mathcal{V}^{\psi\circ\varphi} = \mathcal{V}^{\varphi}\oplus \varphi_{\ast}^{-1}(\mathcal{V}^{\psi}).
\end{equation*}
In particular, $\varphi_{\ast}^{-1}(\mathcal{V}^{\psi})\subset\mathcal{H}^{\varphi}$.

We may state and prove the main result of this section.

\begin{thm}\label{thm:conformality}
    The lift $L\varphi:O(\mathcal{H}^{\varphi})\to L(N)$ is a horizontally conformal submersion if and only if $\varphi:M\to N$ is horizontally conformal with constant dilatation and totally geodesic. Moreover, in such case, $\Lambda$ is constant.
\end{thm}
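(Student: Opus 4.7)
I would test the defining identity for horizontal conformality
\[
g_{L(N)}\bigl((L\varphi)_* v_1, (L\varphi)_* v_2\bigr) = \Lambda \cdot g_{O(\mathcal{H}^{\varphi})}(v_1, v_2)
\]
against the two families of horizontal vectors provided by Proposition \ref{prop:Lvarphi}, computing the left-hand side via Lemma \ref{lem:Lphi} and the right-hand side via Lemma \ref{lem:S}. The converse direction is cleaner and I would dispatch it first.

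\textbf{Converse direction.} Under the hypothesis $\Pi_{\varphi} \equiv 0$ one immediately has $(\Pi_{\varphi})_X \equiv 0$, and the relation $\varphi_* A_Y(X) = \Pi_{\varphi}(X, Y)$ together with the fact that $\varphi_*|_{\mathcal{H}^{\varphi}}$ is an isomorphism forces $A_Y \equiv 0$ for all $Y \in \mathcal{V}^{\varphi}$. Consequently the difference tensor $S$ vanishes on mixed horizontal--vertical arguments, $\mathcal{W}$ preserves the splitting $TM = \mathcal{H}^{\varphi} \oplus \mathcal{V}^{\varphi}$, and Lemma \ref{lem:Lphi} collapses to
\[
(L\varphi)_* X^{h,\varphi}_u = (\varphi_* X)^h, \qquad (L\varphi)_* Y^{h,\varphi}_u = 0, \qquad (L\varphi)_* P^*_u = (\varphi_* P^{\top})^*
\]
for $X \in \mathcal{H}^{\varphi}$, $Y \in \mathcal{V}^{\varphi}$, $P \in \alg{g}(TM)$. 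Feeding the two horizontal families from Proposition \ref{prop:Lvarphi} into the conformality equation and using constancy of $\lambda$, I would verify $\Lambda = \lambda$ by routine manipulation.

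\textbf{Forward direction.} Assume $L\varphi$ horizontally conformal with dilatation $\Lambda$. The main obstacle is that $\mathcal{W}^{-1}(Y)$ for $Y \in \mathcal{H}^{\varphi}$ generally has a nonzero vertical component, so writing $X = \mathcal{W}^{-1}(Y) = X^{\top} + X^{\bot}$, Lemma \ref{lem:Lphi} produces
\[
(L\varphi)_* (\mathcal{W}^{-1}(Y))^{h,\varphi}_u = (\varphi_* X^{\top})^h + \bigl(\varphi_* (\Pi_{\varphi})_{X^{\top}} + \varphi_* A_{X^{\bot}}\bigr)^*,
\]
which mixes base and fundamental-vertical contributions. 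I plan to bootstrap as follows. First, orthogonality of $(\cdot)^h$ and $(\cdot)^*$ in $g_{L(N)}$ lets me isolate the horizontal piece of a Type 1 vs.\ Type 1 pairing, yielding horizontal conformality of $\varphi$ with dilatation $\Lambda$ (descending to a function on $M$). Next, testing Type 1 against the Type 2 vectors $(\mathcal{W}^{-1}({\rm div}^{\bot} C))^{h,\varphi} - C^*$, whose $(L\varphi)_*$-image contains the explicit symbol $-(\varphi_* C)^*$, and exploiting independence of $Y$, $C$ and the frame $u$, I would peel off the vanishing of $(\Pi_{\varphi})_X$ for $X \in \mathcal{H}^{\varphi}$ and of $A_Y$ for $Y \in \mathcal{V}^{\varphi}$ separately, giving $\Pi_{\varphi} \equiv 0$ on the horizontal-horizontal and mixed blocks. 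Once this is established, $\mathcal{W}$ retroactively preserves the splitting, the Type 2 vs.\ Type 2 pairings simplify, and the leftover vertical--vertical symbol contributions recover $\Pi_{\varphi}|_{\mathcal{V}^{\varphi} \times \mathcal{V}^{\varphi}} = 0$, completing total geodesicity. Constancy of $\Lambda$ (and hence of $\lambda$) then follows from the trace-type argument already used for the tangent bundle case in Theorem \ref{thm:kn}(2).
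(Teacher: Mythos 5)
Your proposal takes a genuinely different route from the paper: the paper never expands the conformality identity against the horizontal families of Proposition \ref{prop:Lvarphi}. Instead it uses the commuting square $\pi^A\circ L\varphi=\varphi_*\circ\pi^A$, with each $\pi^A$ a Riemannian submersion, to transfer horizontal conformality of $L\varphi$ to horizontal conformality of the differential $\varphi_*$ between tangent bundles, and then quotes Theorem \ref{thm:kn} wholesale in both directions; the only direct computation is the pairing of the $C^*$ directions in Step 1, which identifies $\Lambda=\lambda\circ\pi_{O(M)}$. That reduction is precisely what lets the paper avoid all of the bookkeeping you propose.

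As written, two steps of your forward direction would fail. First, the claim that orthogonality of $(\cdot)^h$ and $(\cdot)^*$ in $g_{L(N)}$ ``lets me isolate the horizontal piece'' of a Type 1 vs.\ Type 1 pairing is not correct: orthogonality kills only the cross terms, so each such pairing is a single scalar identity in which $g_N(\varphi_*X_1^{\top},\varphi_*X_2^{\top})$ and the inner product of the fundamental-vertical parts $\bigl(\varphi_*(\Pi_{\varphi})_{X_i^{\top}}+\varphi_*A_{X_i^{\bot}}\bigr)^*$ are added together. You cannot read off horizontal conformality of $\varphi$ from this before knowing that those vertical contributions vanish --- which is exactly what you intend to prove afterwards, so the bootstrap is circular at its first step. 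Second, and more seriously, the block $\Pi_{\varphi}|_{\mathcal{V}^{\varphi}\times\mathcal{V}^{\varphi}}$ never appears in Lemma \ref{lem:Lphi}: since $L\varphi(u)$ retains only the horizontal frame vectors $\varphi_*u_1,\dots,\varphi_*u_k$, the images under $(L\varphi)_*$ involve only $(\Pi_{\varphi})_X$ restricted to $\mathcal{H}^{\varphi}$ and the operators $A_Y$. There are therefore no ``leftover vertical--vertical symbol contributions'' from which to recover total geodesicity of the fibres; that block is reached only by passing to $\varphi_{**}$ on all of $TTM$ and invoking Theorem \ref{thm:kn}(2), as the paper does. (In the converse direction, ``$A_Y\equiv0$ hence $\mathcal{W}$ preserves the splitting'' is also too quick: $\mathcal{W}$ involves $\skal{S_{e_i}}{S_X}$ and $A$ controls only one block of $S$; you additionally need the integrability of $\mathcal{H}^{\varphi}$ and total geodesicity of $\mathcal{V}^{\varphi}$ supplied by Theorem \ref{thm:kn}(2).) The fix in all cases is the same: route the argument through $\pi^A$ and the tangent-bundle theorem rather than through the explicit horizontal families.
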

\begin{proof}
We divide a proof into steps:

\noindent {\it Step 1: If $L\varphi$ and $\varphi$ are conformal, then $\Lambda = \lambda\circ\pi_{O(M)}$}.

In fact, for $C\in{\rm End}(\mathcal{H}^{\varphi})$ such that ${\rm div}^{\bot}C = 0$, we have
\begin{equation*}
    \Lambda(u)\sum_i |C(e_i)|^2 = \Lambda(u)|C^{\ast}_u|^2 = |(L\varphi)_{\ast}C^{\ast}_u|^2 =|(\varphi_{\ast}C)^{\ast}_{L\varphi(u)}|^2 =
    \sum_i |\varphi_{\ast}C(v_i)|^2,
\end{equation*}
where $(e_i)$ and $(v_i)$ are orthonormal bases in $\mathcal{H}^{\varphi}$ and $TN$, respectively. Since $\varphi$ is conformal, we may take $v_i=\frac{1}{\sqrt{\lambda(x)}}e_i$, where $\pi_{O(M)}(u)=x$. Thus
\begin{equation*}
    \Lambda(u)\sum_i |C(e_i)|^2 = \lambda(x)\sum_i |C^{\rm adj}(e_i)|^2 =
    \lambda(x)\sum_i |C(e_i)|^2.
\end{equation*}
Thus $\Lambda(u)=\lambda(\pi_{O(M)}(u))$.

\noindent {\it Step 2: A commutative diagram}.

Consider the following diagram

\begin{center}
\begin{tikzcd}
O(\mathcal{H}^{\varphi}) \arrow[r, "L\varphi"] \arrow[d, "\pi^A"]
& L(N) \arrow[d, "\pi^A"] \\
\mathcal{H}^{\varphi} \arrow[r, "\varphi_{\ast}"]
& TN
\end{tikzcd}
\end{center}
We have that $\pi^A:O(\mathcal{H}^{\varphi})\to \mathcal{H}^{\varphi}$ and $\pi^A:L(N)\to TN$ are Riemannian submersions and $\varphi_{\ast}:\mathcal{H}^{\varphi}\to TN$ is conformal isometry. Moreover,
\begin{equation*}
    (L\varphi)_{\ast}^{-1}(\mathcal{V}^{\pi^A}) = \{C^{\ast}\mid C\in{\rm End}(\mathcal{H}^{\varphi}\cap\alg{g}(TM), {\rm div}^{\bot}C = 0, C(u_A) = 0 \}.
\end{equation*}
This implies by Proposition \ref{prop:Lvarphi} that $\mathcal{H}^{\pi^A\circ L\varphi}$ contains $\{\mathcal{W}^{-1}(Y)^{h,\varphi}\mid Y\in\mathcal{H}^{\varphi}\}$ and
\begin{equation*}
    \bigcap_A (L\varphi)_{\ast}^{-1}(\mathcal{V}^{\pi^A}) = \emptyset.
\end{equation*}

\noindent {\it Step 3: Neccessary condition}. 

Assume $L\varphi$ is horizontally conformal. Since $\pi^A\circ L\varphi = \varphi_{\ast}\circ \pi^A$ and $\pi^A$ for all $A$ are Riemannian submersions, by Step 2 it follows that $\varphi_{\ast}$ is horizontally conformal. By Theorem \ref{thm:kn}, $\varphi$ is horizontally conformal with constant dilatation, totally geodesic and $\mathcal{H}^{\varphi}$ is integrable, $\mathcal{V}^{\varphi}$ is totally geodesic. By Step 1, $\Lambda$ is constant.

\noindent {\it Step 4: Sufficient condition}.

Assume $\varphi$ is horizontally conformal with constant dilatation and totally geodesic. By Theorem \ref{thm:kn}, $\Phi = \varphi_{\ast}$ is horizontally conformal with constant dilatation. Since $\pi^A\circ L\varphi = \varphi_{\ast}\circ \pi^A$ and $\pi^A$ for all $A$ are Riemannian submersions, by Step 2 it follows that $L\varphi$ is horizontally conformal with constant dilatation.
\end{proof}

\subsection{Lift as a harmonic morphism}

Let $(M,g_M)$ and $(N, g_N)$ be two Riemannian manifolds. A smooth map $\varphi:M\to N$ is called {\it harmonic} if its tension field $\tau(\varphi) = {\rm tr}B_{\varphi}$ vanishes, i.e., 
\begin{equation*}
    \tau(\varphi) = \sum_i \nabla^{\varphi}_{e_i}\varphi_{\ast}e_i - \varphi_{\ast}(\nabla^M_{e_i}e_i) = 0.
\end{equation*}
For a real valued function $f:M\to \mathbb{R}$, $f$ is harmonic if the Laplacian of $f$ vanishes, i.e.,
\begin{equation*}
    \Delta f = {\rm div}(\nabla f) = 0.
\end{equation*}
We say that $\varphi$ is a {\it harmonic morphism} if it maps locally harmonic functions to harmonic functions. This means, that for every harmonic function $f:U\to\mathbb{R}$ on some open subset of $M$ the composition $f\circ\varphi:\varphi^{-1}(U)\to\mathbb{R}$ is a harmonic function.

Well known characterization of harmonic morphisms \cite{bf} states that a map is a harmonic morphism if and only if it is (horizontally) conformal and harmonic. Moreover, if a map $\varphi$ is already horizontally conformal (with dilatation $\lambda$), then its tension field simplifies to
\begin{equation*}
    \tau(\varphi) = -\frac{n-2}{2}\varphi_{\ast}\nabla(\ln \lambda) - \varphi_{\ast}(H_{\varphi}),
\end{equation*}
where $H_{\varphi}$ is the mean curvature of the fibers of $\varphi$,
\begin{equation*}
    H_{\varphi} = \sum_{\alpha} (\nabla^M_{e_{\alpha}}e_{\alpha})^{\top}. 
\end{equation*}
Thus a homothety $\varphi$ is a harmonic morphism if and only if the fibers are minimal submanifolds.

We have the following characterization of lifts $L\varphi$ being harmonic morphisms.

\begin{thm}\label{thm:Liftharmmorph}
    A Lift $L\varphi:O(\mathcal{H}^{\varphi})\to L(N)$ is a harmonic morphism if and only if $\varphi$ is a totally geodesic harmonic morphism and the dilatation of $\varphi$ is constant. In particular, the dilatation of $L\varphi$ is constant.
\end{thm}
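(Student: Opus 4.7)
My plan is to combine the classical characterization \emph{harmonic morphism $=$ horizontally conformal $+$ harmonic} with Theorem~\ref{thm:conformality}, and to show that once $L\varphi$ is horizontally conformal the harmonicity condition is automatic. The $(\Rightarrow)$ direction is then quick: if $L\varphi$ is a harmonic morphism, it is in particular horizontally conformal, so by Theorem~\ref{thm:conformality} $\varphi$ is horizontally conformal, totally geodesic, and of constant dilatation. A totally geodesic map has $\tau(\varphi) = \operatorname{tr}\Pi_{\varphi} = 0$, so $\varphi$ is harmonic and hence a harmonic morphism, proving one half.

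For $(\Leftarrow)$, Theorem~\ref{thm:conformality} gives that $L\varphi$ is horizontally conformal with constant dilatation $\Lambda$. Since $\Lambda$ is constant, the simplified tension field identity for horizontally conformal maps reduces to $\tau(L\varphi) = -(L\varphi)_{\ast}(H_{L\varphi})$, so it suffices to check $H_{L\varphi} = 0$. The key structural observation is that $\Pi_{\varphi} = 0$ forces the difference tensor $S$ of $\nabla$ and $\nabla^{\mathcal{H}^{\varphi}}$ to vanish identically on $TM$. Indeed, $\Pi_{\varphi}(X,Y) = 0$ for $X\in\mathcal{H}^{\varphi}$, $Y\in\mathcal{V}^{\varphi}$ together with injectivity of $\varphi_{\ast}$ on $\mathcal{H}^{\varphi}$ gives $A_Y = 0$; for $X,Y\in\mathcal{V}^{\varphi}$ it gives that $\mathcal{V}^{\varphi}$ is totally geodesic; and the two remaining cases ($X,Y$ both horizontal, or $X\in\mathcal{V}^{\varphi}$, $Y\in\mathcal{H}^{\varphi}$) follow from short metric-compatibility computations using what is already known. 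Consequently $X^h = X^{h,\varphi}$ and $\mathcal{W} = \operatorname{Id}$ by Lemmas~\ref{lem:XhD} and~\ref{lem:S}, and Proposition~\ref{prop:Lvarphi} simplifies to
\begin{equation*}
\mathcal{V}^{L\varphi}_u = \{X^{h,\varphi}_u \mid X\in\mathcal{V}^{\varphi}\} \oplus \{P^{\ast}_u \mid P\in\alg{so}(\mathcal{V}^{\varphi})\}.
\end{equation*}

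Picking orthonormal bases $(e_{\alpha})$ of $\mathcal{V}^{\varphi}$ and $(P_a)$ of $\alg{so}(\mathcal{V}^{\varphi})$, the lifts $\{e_{\alpha}^{h,\varphi}\}\cup\{P_a^{\ast}\}$ form an orthonormal frame of $\mathcal{V}^{L\varphi}$, orthogonality of the two summands following from $S_{e_{\alpha}} = 0$. The Levi--Civita formulas on $O(\mathcal{H}^{\varphi})$ collapse to
\begin{equation*}
\nabla^{O(\mathcal{H}^{\varphi})}_{e_{\alpha}^{h,\varphi}} e_{\alpha}^{h,\varphi} = (\nabla_{e_{\alpha}}e_{\alpha})^{h,\varphi}, \qquad \nabla^{O(\mathcal{H}^{\varphi})}_{P_a^{\ast}} P_a^{\ast} = -\tfrac{1}{2}[P_a,P_a]^{\ast} = 0,
\end{equation*}
so the full sum equals $\bigl(\sum_{\alpha}\nabla_{e_{\alpha}}e_{\alpha}\bigr)^{h,\varphi}$. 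Total-geodesicity of $\mathcal{V}^{\varphi}$ places $\sum_{\alpha}\nabla_{e_{\alpha}}e_{\alpha}$ inside $\mathcal{V}^{\varphi}$, so this horizontal lift lies in the first summand of $\mathcal{V}^{L\varphi}$, its $\mathcal{H}^{L\varphi}$-projection vanishes, and $H_{L\varphi} = 0$. The main obstacle is the structural implication $\Pi_{\varphi}=0 \Rightarrow S\equiv 0$: the two ``mixed'' cases are not direct from the definition and require metric-compatibility arguments, but once they are settled the vertical distribution of $L\varphi$ and the connection $\nabla^{O(\mathcal{H}^{\varphi})}$ simplify enough that $H_{L\varphi}=0$ drops out with no further computation.
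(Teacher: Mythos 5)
Your proof is correct, and its skeleton (Fuglede's characterization of harmonic morphisms combined with Theorem~\ref{thm:conformality}) is the same as the paper's; the two arguments diverge only in how minimality of the fibres of $L\varphi$ is obtained in the converse direction. The paper disposes of this in one line, asserting that the fibres of $L\varphi$ are totally geodesic because $\nabla^{O(\mathcal{H}^{\varphi})}_{P^{\ast}}Q^{\ast}=-\frac{1}{2}[P,Q]^{\ast}$ is vertical for $P,Q\in\alg{g}(TM)$; but by Proposition~\ref{prop:Lvarphi} the vertical space $\mathcal{V}^{L\varphi}$ also contains the vectors $X^{h,\varphi}-(A_X)^{\ast}$ with $X\in\mathcal{V}^{\varphi}$, which that one-line justification does not address. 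Your route closes exactly this gap: the implication $\Pi_{\varphi}=0\Rightarrow S\equiv 0$ does hold (from $\Pi_{\varphi}=0$ one gets $(\nabla_XW)^{\top}=0$ for every $X\in TM$ and vertical $W$, and then metric compatibility gives $g(\nabla_XZ,W)=-g(Z,\nabla_XW)=0$ for horizontal $Z$, which settles the two mixed cases you flag), whence $\mathcal{W}=\operatorname{Id}$, $X^{h,\varphi}=X^{h}$, the vertical distribution of $L\varphi$ collapses to the form you state, and the frame-by-frame computation of $H_{L\varphi}$ goes through as written. The price is that your argument is carried out only under the hypotheses of the converse direction, but that is all that is needed there. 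Your forward direction is also marginally more economical than the paper's: you conclude harmonicity of $\varphi$ directly from $\tau(\varphi)=\operatorname{tr}\Pi_{\varphi}=0$, whereas the paper passes through minimality of the fibres of $\varphi$. In short: same strategy, but you carry out in full the key geometric step that the paper leaves essentially implicit.
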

\begin{proof}
    Firstly, notice that fibers of $L\varphi$ are totally geodesic. In fact, since $\nabla^{O(\mathcal{H}^{\varphi})}_{P^{\ast}}Q^{\ast} = -\frac{1}{2}[P,Q]^{\ast}\in \mathcal{V}^{O(\mathcal{H}^{\varphi})}$ for $P,Q\in\alg{g}(TM)$, it follows that the orthogonal component vanishes.

    Assume $L\varphi$ is a harmonic morphism. In particular it is horizontally conformal. By Theorem \ref{thm:conformality}, $\varphi$ is horizontally conformal with constant dilatation and totally geodesic. Moreover, the dilatation $\Lambda$ of the lift is constant and the vertical distribution $\mathcal{V}^{\varphi}$ of $\varphi$ is totally geodesic. In particular, it is minimal. Hence, $\varphi$ is harmonic, thus a harmonic morphism.

    Conversely, assume $\varphi$ is a harmonic morphism, totally geodesic and the dilatation of $\varphi$ is constant. Then $L\varphi$, by Theorem \ref{thm:conformality} is horizontally conformal with constant dilatation. Since the fibers of $L\varphi$ are totally geodesic, hence minimal, it follows that the lift $L\varphi$ is a harmonic morphism.
\end{proof}


\begin{thebibliography}{10}
\bibitem{bw} P. Baird, J. C. Wood, Harmonic morphisms between Riemannian manifolds, Oxford University Press, Oxford 2003.
\bibitem{cl1} L. A. Cordero, M. de Leon, Lifts of tensor fields to the frame bundle. Rend. Circ. Mat. Palermo (2) 32 (1983), no. 2, 236--271.
\bibitem{cl2} L. A. Cordero, M. de Leon, On the curvature of the induced Riemannian metric on the frame bundle of a Riemannian manifold. J. Math. Pures Appl. (9) 65 (1986), no. 1, 81--91.
\bibitem{dom} P. Dombrowski, On the geometry of the tangent bundle. J. Reine Angew. Math. 210 (1962), 73--88.
\bibitem{bf} B. Fuglede, Harmonic morphisms between Riemannian manifolds, 
\bibitem{kn1} W. Koz\l owski, K. Niedzia\l omski, Differential as a harmonic morphism with respect to Cheeger–Gromoll-type metrics, Ann Glob Anal Geom 37 (2010), 327--337.
\bibitem{kn2} W. Koz\l owski, K. Niedzia\l omski, Conformality of a differential with respect to Cheeger-Gromoll type metrics, Geom Dedicata 157 (2012), 227--237.
\bibitem{ks0} O. Kowalski, M. Sekizawa, On curvatures of linear frame bundles with naturally lifted metrics. Rend. Semin. Mat. Univ. Politec. Torino 63 (2005), no. 3, 283--295.
\bibitem{ks1} O. Kowalski, M. Sekizawa, On the geometry of orthonormal frame bundles. Math. Nachr. 281 (2008), no. 12, 1799--1809.
\bibitem{ks2} O. Kowalski, M. Sekizawa, On the geometry of orthonormal frame bundles. II. Ann. Global Anal. Geom. 33 (2008), no. 4, 357--371.
\bibitem{mok} K. P. Mok, On the differential geometry of frame bundles of Riemannian manifolds. J. Reine Angew. Math. 302 (1978), 16--31.
\bibitem{n1} K. Niedzia\l omski, On the geometry of frame bundles, Arch. Math. (Brno) 48 (2012), no.3, 197--206.
\bibitem{n2} K. Niedzia\l omski, On the frame bundle adapted to a submanifold, Math. Nachr. 288 (2015), no.5-6, 648--664.
\bibitem{n3} K. Niedzia\l omski, Geometry of $G$-structures via the intrinsic torsion, SIGMA Symmetry Integrability Geom. Methods Appl. 12 (2016), Paper No. 107, 14 pp.
\end{thebibliography}
\end{document}